\newcommand{\R}{\mathbb{R}}
\newcommand{\C}{\mathbb{C}}
\newcommand{\T}{\mathcal{T}}
\newcommand{\N}{\mathbb{N}}
\newcommand{\restreint}[1]{\,\rule[-0,1cm]{0,1mm}{3,5mm}{}_{\,#1}}
\newcommand{\smooth}{\mathscr{C}^{\infty}}
\newcommand{\ic}{\sqrt{-1}}
\newcommand{\bw}{\overline{w}}
\newcommand{\der}[2]{\frac{\mathrm{d}#1}{\mathrm{d}#2}}
\newcommand{\derpar}[2]{\frac{\partial#1}{\partial#2}}
\newcommand{\e}{\varepsilon}
\newcommand{\E}{\mathbb{E}}
\newcommand{\EE}{\mathrm{E}}
\newcommand{\FF}{\mathscr{F}}
\newcommand{\dE}{\bar{\partial}^E}
\newcommand{\db}{\bar{\partial}}
\newcommand{\Wedge}{\Lambda^{0,\bullet}}
\newcommand{\ahf}{\Omega^{0,\bullet}}
\newcommand{\End}{\mathrm{End}}
\newcommand{\Sp}{\mathrm{Sp}}
\newcommand{\cdc}{\nabla^{\mathrm{Cl}}}
\newcommand{\LL}{\mathscr{L}}
\newcommand{\0}{\mathcal{O}}
\newcommand{\rad}{\mathcal{R}}
\newcommand{\PP}{\mathscr{P}}
\newcommand{\CC}{\mathrm{C}}
\newcommand{\num}{\mathcal{N}}
\newcommand{\RR}{\mathscr{R}}
\newcommand{\kahler}{\text{K\"{a}hler }}
\DeclareMathOperator{\tr}{Tr}
\newtheorem{prop}{Proposition}[section]
\newtheorem{thm}[prop]{Theorem}
\newtheorem{lemme}[prop]{Lemma}
\newtheorem{cor}[prop]{Corollary}
\theoremstyle{definition}
\newtheorem{defn}[prop]{Definition}
\theoremstyle{remark}
\newtheorem{rem}[prop]{Remark}
\numberwithin{equation}{section}
\begin{document}

\title{The first terms in the expansion of the Bergman kernel in higher degrees}

\title{The first terms in the expansion of the Bergman kernel in higher degrees}
\date{October 8, 2012}

\author{Martin {\sc{Puchol}}} 
\address{Universit\'{e} Paris Diderot--Paris 7, Campus des Grands Moulins, B\^{a}timent Sophie Germain, case 7012, 75205 Paris Cedex 13}
\email{martin.puchol@imj-prg.fr}

\author{Jialin {\sc{Zhu}}}
\address{Chern Institute of Mathematics, Nankai University, Tianjin China}
\email{jialinzhu@nankai.edu.cn}

\maketitle

\begin{abstract}
We establish the cancellation of the first $2j$ terms in the diagonal asymptotic expansion of the restriction to the $(0,2j)$-forms of the Bergman kernel associated to the spin${}^c$ Dirac operator on high tensor powers of a positive line bundle twisted by a (non necessarily holomorphic) complex vector bundle, over a compact K\"{a}hler manifold. Moreover, we give a local formula for the first and the second (non-zero) leading coefficients, as well as for the third assuming that the first two vanish.
\end{abstract}

\section{Introduction}
\label{intro}

The Bergman kernel of a \kahler manifold endowed with a positive line bundle $L$ is the smooth kernel of the orthogonal projection on the kernel of the Kodaira Laplacian  $\Box^L =\db^L \db^{L,*} + \db^{L,*}\db^L$. The existence of a diagonal asymptotic expansion of the Bergman kernel associated with the $p^{th}$ tensor power of $L$ when $p\to + \infty$ and the form of the leading term were proved in \cite{MR1064867}, \cite{MR1699887} and \cite{MR1616718}. Moreover, the coefficients in this expansion encode geometric information about the underlying manifold, and therefore they have been studied closely: the second and third terms were computed by Lu \cite{MR1749048}, X. Wang~\cite{MR2154820}, L. Wang \cite{MR2717131} and  by Ma-Marinescu \cite{MR2876259} in different degrees of generality (see also the recent paper \cite{Xu:2011fk}). This asymptotics plays an important role in various problems of \kahler geometry; see for instance \cite{MR1916953} or \cite{Fine:2010fk}. We refer the reader to the book \cite{ma-marinescu} for a comprehensive study of the Bergman kernel and its applications. See also the survey \cite{MR2827819}.

 In fact, Dai-Liu-Ma established the asymptotic development of the Bergman kernel in the symplectic case in \cite{MR2215454}, using the heat kernel (cf. also Ma-Marinescu \cite{MR2246888}). Recently, this asymptotics in the symplectic case found an application in the study of variation of Hodge structures of vector bundles by Charbonneau and Stern in \cite{Charbonneau:2011uq}. In their setting, the Bergman kernel is the kernel of a Kodaira-like Laplacian on a twisted bundle $L\otimes E$, where $E$ is a (not necessarily holomorphic) complex vector bundle. Because of that, the Bergman kernel is no longer supported in degree~0 (unlike it did in the \kahler case), and the asymptotic development of its restriction to the $(0,2j)$-forms is related to the degree of \textquoteleft non-holomorphicity\textquoteright   of $E$. 

In this paper, we will show that the leading term in the asymptotics of the restriction to the $(0,2j)$-forms of the Bergman kernel is of order $p^{\dim X-2j}$ and we will compute it. That will lead to a local version of \cite[(1.3)]{Charbonneau:2011uq}, which is  the main technical results of their paper; see Remark \ref{onretrouveCS}. After that, we will also compute the second term in this asymptotics, as well as the third term in the case where the first two vanish.

We now give more detail about our results. Let $(X,\omega,J)$ be a compact K\"{a}hler manifold of complex dimension $n$. Let $(L,h^L)$ be a holomorphic Hermitian line bundle on $X$, and $(E,h^E)$ a Hermitian complex vector bundle. We endow $(L,h^L)$ with its Chern (i.e., holomorphic and Hermitian) connection $\nabla^L$, and $(E,h^E)$ with a Hermitian connection $\nabla^E$, whose curvatures are respectively $R^L=( \nabla^L)^2$ and $R^E=( \nabla^E )^2$.

Except in the beginning of Section \ref{carreDp}, we will always assume that $(L,h^L,\nabla^L)$ satisfies the \emph{pre-quantization condition}:

\begin{equation}
\label{prequantization}
\omega = \frac{\ic}{2\pi} R^L.
\end{equation}

Let $g^{TX}(\cdot , \cdot) = \omega( \cdot , J\cdot )$ be the Riemannian metric on $TX$ induced by $\omega$ and $J$. It induces a metric $h^{\Wedge}$ on $\Wedge(T^*X):=\Lambda^\bullet(T^{*(0,1)}X)$, see Section \ref{carreDp}.

Let $L^p=L^{\otimes p}$ be the $p^{th}$ tensor power of $L$. Let $\ahf (X, L^p \otimes E) = \smooth(X, \Wedge (T^*X) \otimes L^p \otimes E)$ and $\db^{L^p\otimes E} : \ahf(X,L^p \otimes E)\to \ahf{}^{+1}(X,L^p \otimes E)$ be the Dolbeault operator induced by the $(0,1)$-part of $\nabla^E$ (cf. \eqref{defndbarE}). Let $\db^{L^p\otimes E,*}$ be its dual with respect to the $L^2$-product. We set (see \eqref{defndeDE}):
\begin{equation}
D_p = \sqrt{2} \Big(\db^{L^p\otimes E} +\db^{L^p\otimes E,*}\Big),
\end{equation}
which exchanges odd and even forms.

\begin{defn}
Let 
\begin{equation}
P_p \colon \ahf (X,L^p\otimes E) \to \ker(D_p)
\end{equation}
be the orthogonal projection onto the kernel $\ker(D_p)$ of $D_p$. The operator $P_p$ is called the \emph{Bergman projection}. It has a smooth kernel with respect to $dv_X(y)$, denoted by $P_p(x,y)$, which is called the \emph{Bergman kernel}.
\end{defn}

\begin{rem}
If $E$ is holomorphic, then by Hodge theory and the Kodaira vanishing theorem (see respectively \cite[Theorem 1.4.1]{ma-marinescu} and \cite[Theorem 1.5.6]{ma-marinescu}), we know that, for $p$ large enough, $P_p$ is the orthogonal projection $\smooth(X,L^p\otimes E) \to H^0(X, L^p\otimes E)$. Here, by \cite[Theorem 1.1]{Ma:2002fk}, we just know that $\ker( D_p\restreint{\Omega^{0, \text{odd}}(X,L^p\otimes E)})=0$ for $p$ large, so that $P_p \colon  \Omega^{0, \text{even}} (X,L^p\otimes E) \to \ker(D_p)$. In particular, $P_p(x,x) \in \smooth \left(X, \End(\Lambda^{0,\text{even}}(T^*X)\otimes E)\right)$.
\end{rem}

By Theorem \ref{DpestloperateurdeDiracspinc}, $D_p$ is a Dirac operator, which enables us to apply the following result:
\begin{thm}[{Dai-Liu-Ma, \cite[ Thm. 1.1]{MR2215454}}]
\label{DLM2006}
There exist $\boldsymbol{b}_r \! \in\! \smooth \! \left( X,\End(\Lambda^{0,\mathrm{even}}(T^*X)\otimes E)\right)$ such that for any $k\in \N$ and for $p\to +\infty$:
\begin{equation}
\label{DLM2006eq}
p^{-n} P_p(x,x) = \sum_{r=0}^{k} \boldsymbol{b}_r(x)p^{-r} + O(p^{-k-1}),
\end{equation}
that is for every $k,l \in \N$, there exists a constant $C_{k,l}>0$ such that for any $p\in \N$, 
\begin{equation}
\left | p^{-n} P_p(x,x) - \sum_{r=0}^k \boldsymbol{b}_r(x)p^{-r} \right |_{\mathscr{C}^l(X)} \leq C_{k,l} p^{-k-1}.
\end{equation}
Here $|\cdot|_{\mathscr{C}^l(X)}$ is the $\mathscr{C}^l$-norm for the variable $x \in X$.
\end{thm}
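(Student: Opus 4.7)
The approach I would take is the Dai--Liu--Ma method: reduce to a neighborhood of each diagonal point via a spectral gap, then rescale $D_p^2$ near that point into a perturbation of a model harmonic oscillator.

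\textbf{Localization.} By Theorem \ref{DpestloperateurdeDiracspinc}, one has a Lichnerowicz-type formula for $D_p^2$ whose $p$-linear term, thanks to the pre-quantization condition \eqref{prequantization}, is bounded below by a positive multiple of $p$ on the orthogonal of $\ker(D_p)$. This yields a spectral gap
\begin{equation}
\spec(D_p^2) \subset \{0\} \cup [\mu_0 p - C_L, +\infty)
\end{equation}
with $\mu_0, C_L > 0$ independent of $p$. Writing $P_p = \frac{1}{2\pi \ic}\oint_\Gamma (\lambda - D_p^2)^{-1}\diff \lambda$ for a small contour $\Gamma$ around $0$, combined with finite propagation speed for $\cos(u D_p)$, shows that $P_p(x_0, x_0)$ is determined modulo $O(p^{-\infty})$ by $D_p$ on a geodesic ball of fixed radius around $x_0$.

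\textbf{Rescaling and model operator.} Fix $x_0 \in X$. In normal coordinates on $T_{x_0}X \simeq \R^{2n}$, with the bundles $L$, $E$, $\Wedge(T^*X)$ trivialized along radial geodesics, set $t = p^{-1/2}$ and $(S_t s)(Z) = s(tZ)$, and define
\begin{equation}
\mathscr{L}_t = t^2 S_t^{-1} D_p^2 S_t.
\end{equation}
Taylor expansion of the coefficients in $Z$ gives $\mathscr{L}_t = \mathscr{L}_0 + \sum_{r\geq 1} t^r \mathscr{O}_r$, where $\mathscr{L}_0$ is a harmonic oscillator on $\R^{2n}$ with values in $(\Wedge(T^*X) \otimes E)_{x_0}$, built from $R^L_{x_0}$ and its Clifford action. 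Its kernel is finite-dimensional and the associated orthogonal projection $\PP_0$ has an explicit Mehler-type Gaussian expression.

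\textbf{Resolvent expansion.} The spectral gap of $D_p^2$ transfers uniformly to $\mathscr{L}_t$, so $t^{2n}(S_t^{-1} P_p S_t)(Z, Z') = \frac{1}{2\pi \ic}\oint_\Gamma(\lambda - \mathscr{L}_t)^{-1}(Z, Z') \diff \lambda$ modulo exponentially small errors. A Neumann-series development then yields $(\lambda - \mathscr{L}_t)^{-1} = \sum_{r\geq 0} t^r \FF_r(\lambda)$, with each $\FF_r$ an explicit polynomial expression in $(\lambda - \mathscr{L}_0)^{-1}$ and the $\mathscr{O}_j$. Integrating over $\Gamma$ and evaluating the kernel on the diagonal at $Z = 0$ produces the coefficients $\boldsymbol{b}_r$; parity in $Z$ of the $\mathscr{O}_r$ forces the odd powers of $t$ to vanish on the diagonal, leaving an expansion in $p^{-1} = t^2$, with $\boldsymbol{b}_0(x_0) = \PP_0(0, 0)$.

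The main technical obstacle is obtaining uniform-in-$t$ Sobolev estimates for the resolvent family $(\lambda - \mathscr{L}_t)^{-1}$ on weighted Sobolev spaces over $\R^{2n}$, so that the formal expansion translates into genuine pointwise $\mathscr{C}^l$ control on the kernels. This rests on exploiting the harmonic-oscillator structure of $\mathscr{L}_0$ together with polynomial-growth bounds on the $\mathscr{O}_r$ and a careful elliptic estimate on $\R^{2n}$; the Sobolev embedding then upgrades the resulting $L^2$ bounds into the pointwise estimate \eqref{DLM2006eq}.
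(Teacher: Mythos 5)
Your proposal is correct and follows essentially the same route as the source the paper relies on: the paper does not reprove this theorem but cites Dai--Liu--Ma and then recalls exactly this machinery in Section \ref{rescallingDp2}--\ref{firstcoeff} (spectral gap and localization, normal-coordinate trivialization, the rescaling $\LL_t=t^2S_t^{-1}\kappa^{1/2}D_p^2\kappa^{-1/2}S_t$ with $t=p^{-1/2}$, the Taylor expansion $\LL_t=\LL_0+\sum_r t^r\0_r$, the resolvent expansion giving $\FF_r$, and $\boldsymbol{b}_r(x_0)=\FF_{2r}(0,0)$ with the odd terms vanishing on the diagonal by parity). Only minor conventional discrepancies appear (the paper takes $(S_ts)(Z)=s(Z/t)$ and conjugates by $\kappa^{1/2}$ to keep $\LL_t$ formally self-adjoint), none of which affects the argument.
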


To simplify the formulas, we will denote by 
\begin{equation}
\label{defRscript}
\RR= (R^E)^{0,2}\in \Omega^{0,2}(X, \End(E))
\end{equation}
the $(0,2)$-part of $R^E$ (which is zero if $E$ is holomorphic). For $j\in \llbracket 1,n \rrbracket$, let 
\begin{equation}
I_j \colon \Wedge (T^*X) \otimes E \to \Lambda ^{0,j}(T^*X) \otimes E
\end{equation}
be the natural orthogonal projection.  The first main result in this paper is
\begin{thm}
\label{thm1}
For any $k\in \N$, $k\geq 2j$, we have when $p\to +\infty$:
\begin{equation}
\label{thm1eq}
p^{-n} I_{2j} P_p(x,x) I_{2j} = \sum_{r=2j}^{k} I_{2j} \boldsymbol{b}_r(x) I_{2j} \: p^{-r} + O(p^{-k-1}),
\end{equation}
and moreover,
\begin{equation}
\label{2epartiethm1}
I_{2j} \boldsymbol{b}_{2j} (x) I_{2j} = \frac{1}{(4\pi)^{2j}}\frac{1}{2^{2j}(j!)^2} I_{2j}\left( \RR_x ^ j \right)\left( \RR_x ^ j \right)^* I_{2j},
\end{equation}
where $\left( \RR_x ^ j \right)^*$ is the dual of $\RR_x^j$ acting on $(\Wedge(T^*X)\otimes E)_x$.
\end{thm}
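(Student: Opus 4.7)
The plan is to adapt the rescaling/localization strategy of Dai--Liu--Ma (cf.\ \cite{MR2215454} and \cite{ma-marinescu}). The central observation driving the theorem is that the non-holomorphicity of $E$ enters $D_p^2$ only through the identity $(\db^{L^p\otimes E})^2 = \RR\wedge\cdot$\,, producing in the Bochner--Lichnerowicz formula for $D_p^2$ a coupling term of the form $2(\RR\wedge + \iota_{\RR^*})$ that shifts form degree by $\pm 2$. Crucially this term is of order $0$ in $p$, in contrast with the $O(p)$ Kodaira part which preserves form degree, and it is the sole mechanism producing $\pm 2$ shifts.

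Fixing $x_0\in X$ and working in normal coordinates $Z$ around it, I would perform the standard rescaling $\kappa_p u(Z) = p^{-n/2} u(Z/\sqrt{p})$ and study $\mathscr{L}_t = \kappa_p^{-1} p^{-1} D_p^2 \kappa_p$ with $t = p^{-1/2}$; this admits a formal expansion $\mathscr{L}_t = \mathscr{L}_0 + \sum_{r\geq 1} t^r \mathscr{O}_r$. The limit $\mathscr{L}_0$ is a harmonic oscillator on $T_{x_0}X \cong \C^n$ acting on the fiber $(\Wedge(T^*X)\otimes E)_{x_0}$, and $\ker(\mathscr{L}_0)$ sits entirely in form degree $0$ (the model Kodaira vanishing). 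By the observation above, the $\pm 2$ shifts in the $\mathscr{O}_k$'s first appear at $r=2$, as the pointwise operators $2\RR_{x_0}\wedge$ and $2\iota_{\RR^*_{x_0}}$; higher $\mathscr{O}_k$'s may carry $\pm 1$ shifts coming from Clifford-type terms.

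Writing $\mathscr{P}_t = \frac{1}{2\pi\ic}\oint_\delta (z-\mathscr{L}_t)^{-1}\, dz$ for a small loop $\delta$ enclosing only the eigenvalue $0$ of $\mathscr{L}_0$, the Neumann expansion gives $\mathscr{P}_t = \sum_r t^r \mathscr{F}_r$ where each $\mathscr{F}_r$ is a polynomial in the $\mathscr{O}_k$'s intertwined with $(z-\mathscr{L}_0)^{-1}$. Since $\mathscr{L}_0$ and hence its resolvent preserve form degree, a nonzero $I_{2j}\mathscr{F}_r I_{2j}$ forces the chain of insertions to visit form degree $0$—where the pole of the resolvent lies—and return to degree $2j$; this demands at least $2j$ insertions of the $\pm 2$-shifting type, each of order $t^2$, hence $r\geq 4j$. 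A parity argument shows that the $\pm 1$ shifts coming from Clifford terms always appear in matched pairs and cannot reduce this bound. This yields the vanishing in \eqref{thm1eq}, and I expect this degree/parity bookkeeping to be the main technical obstacle.

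At the critical order $t^{4j}$ (i.e.\ $p^{-2j}$), only one configuration contributes: all $2j$ degree-shifting factors must be the $\RR/\RR^*$ parts of $\mathscr{O}_2$ evaluated at $x_0$, and the central resolvent factor must collapse to the projector $\mathscr{P}_0$ onto $\ker(\mathscr{L}_0)$ in degree $0$. Starting from a class in degree $2j$, the chain is uniquely forced: $j$ successive contractions by $\RR^*_{x_0}$ down to degree $0$, insertion of $\mathscr{P}_0$, then $j$ successive wedges by $\RR_{x_0}$ back up to degree $2j$, producing the algebraic structure $I_{2j}\RR_{x_0}^j\mathscr{P}_0(\RR_{x_0}^j)^* I_{2j}$. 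The diagonal value of $\mathscr{P}_0$ at $Z=0$ is the classical Bargmann--Fock density; tracking the $\sqrt{2}$ in the definition of $D_p$, the factor $\ic/(2\pi)$ from \eqref{prequantization}, and the $(j!)^2$ combinatorial factor coming from the $j$ nested resolvent insertions on each side of $\mathscr{P}_0$, one obtains the constant $\frac{1}{(4\pi)^{2j}2^{2j}(j!)^2}$ of \eqref{2epartiethm1}.
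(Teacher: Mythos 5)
Your proposal follows essentially the same route as the paper: rescale $D_p^2$, expand the resolvent of the rescaled operator, use that $\ker(\LL_0)$ is concentrated in degree $0$ while every degree-shifting term in the expansion of $\LL_t$ enters at order at least $t^2$, and identify the unique surviving chain $I_{2j}\left(\LL_0{}^{-1}\RR_{x_0}\right)^jP^N\left(\RR_{x_0}^{*}\LL_0{}^{-1}\right)^jI_{2j}$ at the critical order, whose harmonic-oscillator eigenvalues $4\pi\cdot 2k$ produce the constant. One simplification you are entitled to: each $\0_r$ splits into pieces of degree $0$ and $\pm2$ only (the shifting pieces coming from the $(0,2)$ and $(2,0)$ parts of $R^E$ in the Lichnerowicz formula), so the $\pm1$ Clifford shifts you hedge against never occur and no parity argument is needed.
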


Theorem \ref{thm1} leads immediately to
\begin{cor}
Uniformly in $x\in X$, when $p\to +\infty$, we have
\label{dl1termecor}
\begin{equation}
\label{dl1terme}
\tr \big (\left(I_{2j} P_p I_{2j}\right)(x,x)\big) =  \frac{1}{(4\pi)^{2j}}\frac{1}{2^{2j}(j!)^2} \left \| \RR_x ^j  \right \|^2 p^{n-2j}+ O(p^{n-2j-1}).
\end{equation}
\end{cor}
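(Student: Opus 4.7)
The plan is to deduce the corollary directly from Theorem \ref{thm1} by taking the pointwise trace. Applying \eqref{thm1eq} with $k=2j$ and multiplying by $p^n$, one obtains, uniformly in $x \in X$ (the uniformity being built into the $O$-notation of Theorem \ref{DLM2006}):
\begin{equation*}
I_{2j} P_p(x,x) I_{2j} = I_{2j} \boldsymbol{b}_{2j}(x) I_{2j}\, p^{n-2j} + O(p^{n-2j-1}).
\end{equation*}
Taking the pointwise trace and substituting \eqref{2epartiethm1} yields
\begin{equation*}
\tr\bigl( (I_{2j} P_p I_{2j})(x,x) \bigr) = \frac{1}{(4\pi)^{2j} 2^{2j} (j!)^2}\, \tr\bigl( I_{2j}(\RR_x^j)(\RR_x^j)^{*} I_{2j} \bigr)\, p^{n-2j} + O(p^{n-2j-1}).
\end{equation*}

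It remains to identify $\tr\bigl( I_{2j}(\RR_x^j)(\RR_x^j)^{*} I_{2j} \bigr)$ with $\|\RR_x^j\|^2$. Since $I_{2j}$ is an orthogonal projection, the operator inside the trace equals $(I_{2j}\RR_x^j)(I_{2j}\RR_x^j)^{*}$, and the standard identity $\tr(AA^{*}) = \|A\|_{HS}^2$ expresses this trace as the Hilbert--Schmidt norm squared of $I_{2j}\RR_x^j$ acting on $(\Wedge(T^*X)\otimes E)_x$. Because $\RR$ takes values in $\Lambda^{0,2}(T^*X) \otimes \End(E)$, the operator $\RR_x^j$ raises the antiholomorphic form degree by exactly $2j$, so $I_{2j}\RR_x^j$ vanishes on every component $\Lambda^{0,q}(T^*X)\otimes E$ with $q \geq 1$ and coincides with $\RR_x^j$ on the $(0,0)$-component. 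Evaluating the Hilbert--Schmidt norm in an orthonormal frame $\{e_k\}$ of $E_x$ therefore gives $\sum_k \|\RR_x^j e_k\|^2$, which is by definition $\|\RR_x^j\|^2$ for the natural norm on $\Lambda^{0,2j}(T^*X)_x \otimes \End(E_x)$ induced by $h^{\Wedge}$, $h^E$, and its dual. This last step is a routine unpacking of definitions; there is no genuine obstacle, since the full content of the corollary has already been packaged into Theorem \ref{thm1}.
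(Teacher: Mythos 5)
Your proposal is correct and coincides with the paper's (implicit) argument: the paper derives the corollary immediately from Theorem \ref{thm1} by taking the pointwise trace of \eqref{thm1eq} with $k=2j$ and identifying $\tr\bigl(I_{2j}(\RR_x^j)(\RR_x^j)^*I_{2j}\bigr)$ with $\|\RR_x^j\|^2$, exactly as you do. Your careful unpacking of the Hilbert--Schmidt identification is a correct elaboration of what the paper leaves unstated.
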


\begin{rem}
\label{onretrouveCS}
By integrating \eqref{dl1terme} over $X$, we get
\begin{equation}
\label{dl1termeint}
\tr \left(I_{2j} P_p I_{2j}\right)=  \frac{1}{(4\pi)^{2j}}\frac{1}{2^{2j}(j!)^2} \left \| \RR ^j  \right \|^2_{L^2} p^{n-2j}+ O(p^{n-2j-1}),
\end{equation}
which is the main technical result of Charbonneau and Stern \cite[(1.3)]{Charbonneau:2011uq}; thus Corollary \ref{dl1termecor} can be viewed as a local version of \cite[(1.3)]{Charbonneau:2011uq}.
The constant in \eqref{dl1termeint} differs from the one in \cite{Charbonneau:2011uq} because our conventions are not the same as theirs (e.g., they choose $\omega = \ic R^L$, etc...).
\end{rem}

Let  $R^E_\Lambda:=  -\ic \sum_i R^E(w_i,\bw_i)$ for $(\bw_1,\dots,\bw_n)$ an orthonormal frame of $T^{(0,1)}X$. Let $R^{TX}$ be the curvature of the Levi-Civita connection $\nabla^{TX}$ of $(X,g^{TX})$, and for $(e_1,\dots,e_{2n})$ an orthonormal frame of $TX$, let $r^X=-\sum_{i,j}\langle R^{TX}(e_i,e_j)e_i,e_j \rangle$ be the scalar curvature of $X$.

For $j,k\in \N$ and $j \geq k$, we also define $\CC_j(k)$ by
\begin{equation}
\label{defCjk}
\CC_j(k):=\frac{1}{(4\pi)^j}\frac{1}{2^{k}k!}\frac{1}{\prod_{s=k+1}^{j}(2s+1)},
\end{equation}
with the convention that $\prod_{s\in \emptyset} =1$.

Let $\nabla^{\Wedge}$ be the connection on $\Wedge(T^*X)$ induced by $\nabla^{TX}$. Let $\nabla^{\Wedge \otimes E}$ be the connection on $\Wedge(T^*X)\otimes E$ induced by $\nabla^E$ and  $\nabla^{\Wedge}$, and let $\Delta^{\Wedge \otimes E}$ be the associated Laplacian. For  precise definitions, see Section \ref{carreDp}.

For every operators $A$ acting on a Hermitian space, we will define  $\mathrm{Pos}[A]$ (resp. $\mathrm{Sym}[A]$) the positive (non necessarily definite) operator (resp. the symmetric operator) associated to $A$:
  \begin{equation}
  \mathrm{Pos}[A]=AA^* \quad \text{and} \qquad \mathrm{Sym}[A]=A+A^*.
  \end{equation}
Finally, to simplify the notation, we will define $\T_0(j)$, $\T_1(j)$, $\T_2(j)$ and $\T_3(j)$ as follows:
\begin{itemize}[label=$\bullet$]
\item 
$\T_0(0)=0$, and for $j\geq 1$,
\begin{equation}
\T_0(j)=  \frac{1}{\sqrt{2\pi}} \sum_{i=0}^{n} \sum_{k=0}^{j-1} I_{2j} \Big(\CC_j(k)-\CC_j(j)\Big)\RR_{x}^{ j-k-1}(\nabla^{\Wedge \otimes E}_{\bw_i}\RR_\cdot)(x)  \RR_{x}^{ k}I_0.
\end{equation}
\item $\T_1(0)=\T_1(1)=0$, and for $j\geq 2$,
 \begin{align}
&\T_1(j)=  \frac{I_{2j}}{2\pi}    \!  \sum_{q=0}^{j-2}\sum_{m=0}^{q} \! \left \{ \Big(\CC_j(j)-\CC_j(q+1) \Big)  \RR_{x}^{j-(q+2)}(\nabla^{\Wedge \otimes E}_{\bw_i}\RR_\cdot)(x)\RR_{x}^{q-m}(\nabla^{\Wedge \otimes E}_{w_i}\RR_\cdot)(x)\RR_{x}^{m} \right.  \notag \\
& +\CC_j\left(m \right) \left[\prod_{s=q+2}^j\Big(1+\frac{1}{2s}\Big)-1 \right]  \left. \RR_{x}^{j-(q+2)}(\nabla^{\Wedge \otimes E}_{w_i}\RR_\cdot)(x)\RR_{x}^{q-m}(\nabla^{\Wedge \otimes E}_{\bw_i}\RR_\cdot)(x)\RR_{x}^{m} \right\}I_0,
\end{align}
\item $\T_2(0)=0$, and for $j\geq 1$,
\begin{equation}
\T_2(j)= \frac{1}{4\pi}  I_{2j}  \sum_{k=0}^{j-1}\left \{ \Big (\CC_j(j)- \CC_j(k)\Big) \RR_{x}^{j-(k+1)}(\Delta^{\Wedge \otimes E}\RR_\cdot)(x) \RR_{x}^k \right\}I_0,\end{equation}
\item for $j\geq 0$,
 \begin{equation}
\T_3 (j)=I_{2j}\sum_{k=0}^j \RR_{x}^{ j-k} \left[\frac{1}{6}\left(\CC_{j+1}(j+1)-\frac{\CC_j(k)}{2\pi(2k+1)}\right)r^X_{x} -\frac{\CC_j(k)}{4\pi(2k+1)}\ic R^{E}_{\Lambda,x} \right] \RR_{x}^k I_0.
 \end{equation}
\end{itemize}

The second goal of this paper is to compute the second term in the expansion \eqref{thm1eq}.
\begin{thm}
\label{thm2}
We can decompose $I_{2j} \boldsymbol{b}_{2j+1} (x) I_{2j}$ as the sum of four terms:
 \begin{equation}
I_{2j} \boldsymbol{b}_{2j+1} (x) I_{2j} = \mathrm{Pos}[\T_0(j)] +\CC_j(j) \mathrm{Sym}\left[(\T_1(j)+\T_2(j)+\T_3(j))\left(\RR_{x}^{ j}\right) ^* I_{2j}\right].
 \end{equation}
 \end{thm}
 
For instance, for $j=1$, using the fact that $(R^E_\Lambda)^*=R^E_\Lambda$, we find
 \begin{multline}
128\pi^3 I_{2} \boldsymbol{b}_{3} (x) I_{2} =  \frac{1}{9} \mathrm{Pos}\left[I_2\sum_{i=0}^{n}(\nabla^{\Wedge \otimes E}_{\bw_i}\RR_\cdot)(x)I_0\right]-\frac{1}{6}\mathrm{Sym}\left[I_2 (\Delta^{\Wedge \otimes E}\RR_\cdot)(x)\RR_x^*I_2\right]  \\
-\frac{\ic}{6}I_2\big(R^E_\Lambda\RR_x \RR_x^*+\RR_x \RR_x^*R^E_\Lambda\big)I_2-\frac{2\ic}{3}I_2\RR_xR^E_\Lambda\RR_x^*I_2-\frac{r^X_x}{4}I_2\RR_x\RR_x^*I_2.
 \end{multline}

The last goal of this paper is to compute the third term in the expansion \eqref{thm1eq}, assuming that the first two vanish.
\begin{thm}
\label{thm3}
Let $j \in \llbracket 1, n \rrbracket$. If 
 \begin{equation}
I_{2j} \boldsymbol{b}_{2j}(x) I_{2j}=I_{2j} \boldsymbol{b}_{2j+1}(x) I_{2j}=0,
\end{equation}
then $\T_3$ equals
\begin{equation}
\T'_3 (j):=-\ic I_{2j}\sum_{k=0}^j \frac{\CC_j(k)}{4\pi(2k+1)}\RR_{x}^{ j-k}  R^{E}_{\Lambda,x} \RR_{x}^k I_0,
 \end{equation}
 and
 \begin{equation}
I_{2j} \boldsymbol{b}_{2j+2} (x) I_{2j} = \mathrm{Pos}[\T_1(j)+\T_2(j)+\T'_3(j)].
 \end{equation}
\end{thm}

Theorems \ref{thm1}, \ref{thm2} and \ref{thm3} yields to the following result.
\begin{cor}
We have:
\begin{equation}
I_{2j} P_p(x,x) I_{2j} = O(p^{n-2j-3}) \iff 
\left\{
\begin{aligned}
&\RR_x^j=0, \\
& \T_0(j)=0,\\
&\T_1(j)+\T_2(j)+\T'_3(j)=0.
\end{aligned}\right. 
\end{equation}
\end{cor}

This paper is organized as follows. In Section \ref{rescallingDp2} we compute the square of $D_p$ and use a local trivialization to rescale it, and then give the Taylor expansion of the rescaled operator. In Section~\ref{firstcoeff}, we use this expansion to give a formula for the coefficients $\boldsymbol{b}_r$ appearing in \eqref{DLM2006eq}, which will lead to a proof of Theorem \ref{thm1}. In Section \ref{2ndcoeff}, we prove Theorem \ref{thm2} using again the formula for $\boldsymbol{b}_r$.  Finally, in Section \ref{3rdcoeff}, we prove Theorem \ref{thm3} using the technics and results of the preceding sections.

In this whole paper, when an index variable appears twice in a single term, it means that we are summing over all its possible values.

\subsection*{Acknowledgments} The authors want to thank Professor Xiaonan Ma for helpful discussions about this topic, and more generally for his kind and constant support.


\section{Rescaling $D_p^2$ and Taylor expansion}
\label{rescallingDp2}

In this section, we follow the method of \cite[Chapter 4]{ma-marinescu}, that enables to prove the existence of $\boldsymbol{b}_r$ in \eqref{DLM2006eq} in the case of a holomorphic vector bundle $E$, and that still applies here (as pointed out in \cite[Section 8.1.1]{ma-marinescu}). Then, in section 3 and 4, we will use this approach to understand $I_{2j} \boldsymbol{b}_r I_{2j}$ and prove Theorems \ref{thm1} and \ref{thm2}. 

In Section \ref{carreDp}, we will first prove Theorem \ref{DpestloperateurdeDiracspinc}, and then give a formula for the square of $D_p$, which will be the starting point of our approach.

In Section \ref{rescalingDp2}, we will rescale the operator $D_p^2$ to get an operator $\LL_t$, and then give  the Taylor expansion of the rescaled operator.

In Section \ref{BergmankernelofL0}, we will study more precisely the limit operator $\LL_0$.

\subsection{The square of $D_p$}
\label{carreDp}

For further details on the material of this subsection, the lector can read \cite{ma-marinescu}. First of all let us give some notations.

The Riemannian volume form of $(X, g^{TX})$ is given by $dv_X  =\omega^n/n!$ . We will denote by  $\langle \cdot , \cdot \rangle$ the $\C$-bilinear form on $TX\otimes \C$ induced by $g^{TX}$.

 For the rest of this Section \ref{carreDp}, we will fix $(w_1,\dots,w_n)$ a local orthonormal frame of $T^{(1,0)}X$ with dual frame $(w^1,\dots,w^n)$. Then $(\bw_1,\dots,\bw_n)$ is a local orthonormal frame of $T^{(0,1)}X$ whose dual frame is denoted by $(\bw^1,\dots,\bw^n)$, and the vectors
\begin{equation}
\label{defnbase}
e_{2j-1}=\frac{1}{\sqrt{2}} \left( w_j + \bw_j \right) \text{ and } e_{2j}=\frac{\ic}{\sqrt{2}} \left( w_j - \bw_j \right)
\end{equation}
form a local orthonormal frame of $TX$.

We choose the Hermitian metric  $h^{\Wedge}$ on $\Wedge (T^*X):=\Lambda^\bullet(T^{*(0,1)}X)$  such that \\ $ \{ \bw^{j_1} \wedge \dots \wedge \bw^{j_k} / 1\leq j_1< \dots<j_k\leq n\}$ is an orthonormal frame of $\Wedge (T^*X)$.

For any Hermitian bundle $(F,h^F)$ over $X$, let $\smooth(X, F)$  be the space of smooth sections of~$F$. It is endowed with the $L^2$-Hermitian metric:

\begin{equation}
\langle s_1, s_2 \rangle = \int_X  \langle s_1(x) , s_2(x) \rangle_{h^F} dv_x(x).
\end{equation}
The corresponding norm will be denoted by $\| \cdot \|_{L^2}$, and the completion of $\smooth(X, F)$ with respect to this norm by $L^2(X,F)$.

Let $\dE$ be the Dolbeault operator of $E$: it is the $(0,1)$-part of the connection $\nabla^E$
\begin{equation}
\label{defndbarE}
\dE : = \left( \nabla^E \right)^{0,1} \colon \smooth (X,E) \to \smooth ( X , T^{*(0,1)}X\otimes E).
\end{equation}
We  extend it to get an operator 
\begin{equation}
\label{defndbarE2}
\dE \colon \ahf (X,E) \to \ahf {}^{+1}(X,E)
\end{equation}
 by the Leibniz formula: for $ s \in \smooth (X,E)$ and $\alpha \in \smooth ( X, \Wedge (T^*X))$ homogeneous,
\begin{equation}
\dE (\alpha \otimes s)  = (\db\alpha) \otimes s + (-1)^{\deg \alpha} \alpha \otimes \dE s.
\end{equation}

We can now define the operator 
\begin{equation}
\label{defndeDE}
D^E = \sqrt{2} \left( \dE + \dE{}^{,*} \right) \colon \ahf(X,E) \to \ahf (X,E),
\end{equation}
where the dual is taken with respect to the $L^2$- norm associated with the Hermitian metrics $h^{\Wedge}$ and $h^E$. 

Let  $\nabla^{\Lambda(T^*X)}$ be the connection on $\Lambda(T^*X)$ induced by the Levi-Civita connection $\nabla^{TX}$ of $X$. Since $X$ is K\"{a}hler, $\nabla^{TX}$ preserves $T^{(0,1)}X$ and $T^{(1,0)}X$. Thus, it induces a connection $\nabla^{T^{*(0,1)}X}$ on $T^{*(0,1)}X$, and then a Hermitian connection $\nabla^{\Wedge}$ on $\Wedge(T^*X)$. We then have that for any $\alpha \in \smooth(X,\Wedge(T^*X))$,
\begin{equation}
\label{nablaahf}
\nabla^{\Wedge}\alpha = \nabla^{\Lambda(T^*X)}\alpha.
\end{equation}
Note the important fact that $\nabla^{\Lambda(T^*X)}$ preserves the bi-grading on $\Lambda^{\bullet,\bullet}(T^*X)$.

Let $\nabla^{\Wedge\otimes E} :=\nabla^{\Wedge}\otimes 1 + 1\otimes \nabla^E$ be the connection on $\Wedge(T^*X)\otimes E$ induced by $\nabla^{\Wedge}$ and $\nabla^E$

\begin{prop}
On $\ahf(X,E)$, we have:
\begin{equation}
\begin{matrix}
\label{formulededeetoile}
\dE =  \bw^j\wedge \nabla^{\Wedge\otimes E}_{\bw_j}, \\
\dE{}^{,*} = -i_{\bw_j} \nabla^{\Wedge\otimes E}_{w_j}.
\end{matrix}
\end{equation}
\end{prop}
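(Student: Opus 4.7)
The plan is to first establish both formulas on the untwisted algebra $\Wedge(T^*X)$, where they reduce to standard identities for $\bar\partial$ and its adjoint on a K\"ahler manifold, and then extend to $\Wedge(T^*X)\otimes E$ via the Leibniz rule and the definition $\dE s = \bw^j\otimes \nabla^E_{\bw_j}s$ for $s\in\smooth(X,E)$. The key structural fact used throughout is that, since $(X,\omega,J)$ is K\"ahler, $\nabla^{TX}$ preserves the splitting $TX\otimes\C = T^{(1,0)}X\oplus T^{(0,1)}X$, and hence $\nabla^\Wedge$ preserves the bigrading on $\Lambda^{\bullet,\bullet}(T^*X)$.

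For the first formula, I would work in K\"ahler normal coordinates at an arbitrary point $x_0$: the Christoffel symbols of $\nabla^\Wedge$ vanish there, so the pointwise identity $\bar\partial\alpha(x_0) = \bw^j\wedge\nabla^\Wedge_{\bw_j}\alpha(x_0)$ for $\alpha\in\ahf(X)$ reduces to the classical coordinate formula for $\bar\partial$. Both sides are tensorial in the choice of orthonormal frame, so this gives a global identity on untwisted forms. Applied to $\alpha\otimes s$ and combined with the Leibniz rule and the formula for $\dE s$ on $\smooth(X,E)$, this yields $\dE = \bw^j\wedge\nabla^{\Wedge\otimes E}_{\bw_j}$.

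For the second formula, I would compute $\dE{}^{,*}$ from the first. The pointwise adjoint of $\bw^j\wedge$ with respect to $h^\Wedge$ is $i_{\bw_j}$, and integration by parts for the Hermitian connection $\nabla^{\Wedge\otimes E}$ against the complex vector field $\bw_j$ --- using the compatibility identity $V\langle s_1,s_2\rangle = \langle\nabla_V s_1,s_2\rangle + \langle s_1,\nabla_{\bar V}s_2\rangle$ for complex $V$ together with the divergence theorem $\int_X V(f)\,dv_X = -\int_X \mathrm{div}(V)f\,dv_X$ --- gives
\begin{equation*}
\dE{}^{,*} = -\nabla^{\Wedge\otimes E}_{w_j}\, i_{\bw_j} - \mathrm{div}(w_j)\, i_{\bw_j}.
\end{equation*}
Reducing this to the target $-i_{\bw_j}\nabla^{\Wedge\otimes E}_{w_j}$ (summed over $j$) requires
\begin{equation*}
\sum_j\bigl([\nabla^{\Wedge\otimes E}_{w_j},i_{\bw_j}] + \mathrm{div}(w_j)\, i_{\bw_j}\bigr) = 0.
\end{equation*}
The bracket acts only on the $\Wedge(T^*X)$ factor (since $i_{\bw_j}$ does) and, by the general identity $[\nabla_X,i_Y] = i_{\nabla_X Y}$, equals $i_{\nabla^{TX}_{w_j}\bw_j}$. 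So the task reduces to the K\"ahler identity
\begin{equation*}
\sum_j\bigl(\nabla^{TX}_{w_j}\bw_j + \mathrm{div}(w_j)\, \bw_j\bigr) = 0.
\end{equation*}

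The main (but still routine) obstacle is to verify this last K\"ahler identity. I would expand $\mathrm{div}(w_j) = \sum_k\langle\bw_k,\nabla_{w_k}w_j\rangle + \sum_k\langle w_k,\nabla_{\bw_k}w_j\rangle$ using the complex trace formula, apply metric compatibility together with $\langle w_k,\bw_l\rangle = \delta_{kl}$ to cancel the first sum against $\sum_j\nabla^{TX}_{w_j}\bw_j$, and then note that the remainder $\sum_k\langle w_k,\nabla^{TX}_{\bw_k}w_j\rangle$ vanishes because $\nabla^{TX}_{\bw_k}w_j$ is of type $(1,0)$ by the K\"ahler condition, and $\langle w,w'\rangle = 0$ for any two $(1,0)$-vectors. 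The twisting by $E$ introduces no additional difficulty, since every commutator above descends to the $\Wedge(T^*X)$ factor alone.
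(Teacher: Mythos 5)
Your proof is correct, and it reaches the same two identities the paper does, but by filling in explicitly what the paper delegates to citations. For the first formula the paper invokes the frame-independent identity $d=\e\circ\nabla^{\Lambda(T^*X)}$ (valid for any torsion-free connection, \cite[(1.2.44)]{ma-marinescu}), extends it to $\nabla^E=\e\circ\nabla^{\Lambda(T^*X)\otimes E}$, and takes the $(0,1)$-part using that $\nabla^{\Wedge}$ preserves the bigrading; your normal-coordinates verification is a pointwise version of exactly this fact, and your appeal to tensoriality of $\bw^j\wedge\nabla^{\Wedge}_{\bw_j}$ in the orthonormal frame is the right way to globalize it. For the second formula the paper simply refers to the computation of \cite[Lemma 1.4.4]{ma-marinescu}, which is the same integration by parts you carry out: your adjoint formula $(\nabla^{\Wedge\otimes E}_{\bw_j})^{*}=-\nabla^{\Wedge\otimes E}_{w_j}-\mathrm{div}(w_j)$, the commutator identity $[\nabla^{\Wedge\otimes E}_{w_j},i_{\bw_j}]=i_{\nabla^{TX}_{w_j}\bw_j}$, and the cancellation $\sum_j\bigl(\nabla^{TX}_{w_j}\bw_j+\mathrm{div}(w_j)\,\bw_j\bigr)=0$ all check out (the divergence trace loses its $\langle w_k,\nabla^{TX}_{\bw_k}w_j\rangle$ part because $\nabla^{TX}_{\bw_k}w_j$ stays of type $(1,0)$ and the bilinear form $\langle\cdot,\cdot\rangle$ vanishes on pairs of $(1,0)$-vectors, and the remaining part cancels against $\sum_j\nabla^{TX}_{w_j}\bw_j$ by metric compatibility and $\langle\bw_k,w_j\rangle=\delta_{kj}$). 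What your version buys is a self-contained argument; what the paper's buys is brevity and a cleaner conceptual statement of where the K\"{a}hler hypothesis enters (preservation of the bigrading by $\nabla^{\Lambda(T^*X)}$), which you use in the equivalent form that $\nabla^{TX}$ preserves $T^{(1,0)}X\oplus T^{(0,1)}X$.
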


\begin{proof}
We still denote by $\nabla^E$ the extension of the connection $\nabla^E$ to $\Omega^{\bullet,\bullet}(X,E)$ by the usual formula $\nabla^E(\alpha \otimes s) = d\alpha \otimes s + (-1)^{\deg \alpha} \alpha \wedge \nabla^E s$ for $ s \in \smooth (X,E)$ and $\alpha \in \smooth ( X, \Lambda (T^*X))$ homogeneous. We know that $d = \e \circ \nabla^{\Lambda(T^*X)}$ where $\e$ is the exterior multiplication (see \cite[(1.2.44)]{ma-marinescu}), so we get that $\nabla^E=\e\circ\nabla^{\Lambda(T^*X) \otimes E}$. Using \eqref{nablaahf}, it follows that
$$ \dE = (\nabla^E)^{0,1} = \bw^j\wedge \nabla^{\Wedge\otimes E}_{\bw_j},$$
which is the first part of \eqref{formulededeetoile}.

The second part of our proposition follows classically from the first by exactly the same computation as in \cite[Lemma 1.4.4]{ma-marinescu}.
 \end{proof}

\begin{defn}
Let $v = v^{1,0} + v^{0,1} \in TX= T^{(1,0)}X\oplus T^{(0,1)}X$, and $\bar{v}^{(0,1),*}\in T^{*(0,1)}X$ the dual of $v^{1,0}$ for $\langle \cdot , \cdot \rangle$. We define the \emph{Clifford action of  $TX$ on $\Wedge(T^*X)$} by
\begin{equation}
c(v) = \sqrt{2} \left( \bar{v}^{(0,1),*} \wedge\, - i_{v^{0,1}} \right).
\end{equation}
\end{defn}
We verify easily that for $u,v \in TX$,
\begin{equation}
c(u)c(v) + c(v)c(u) = -2 \langle u,v\rangle,
\end{equation}
and that for any skew-adjoint endomorphism $A$ of $TX$,
\begin{align}
\label{formulecA}
\frac{1}{4} \langle Ae_i,e_j \rangle c(e_i)&c(e_j) = -\frac{1}{2}\langle Aw_j,\bw_j\rangle + \langle Aw_\ell,\bw_m \rangle \bw^m \wedge i_{\bw_\ell} \notag \\
 &+\frac{1}{2}\langle Aw_\ell,w_m\rangle i_{\bw_\ell} i_{\bw_m}+\frac{1}{2}\langle A\bw_\ell,\bw_m \rangle \bw^\ell\wedge\bw^m\wedge.
\end{align}

Let $\nabla^{\det}$ be the Chern connection of $\det(T^{(1,0)}X) := \Lambda^n (T^{(1,0)}X)$, and $\nabla^{\mathrm{Cl}}$ the Clifford connection on $\Wedge(T^*X)$ induced by $\nabla^{TX}$ and $\nabla^{\det}$ (see \cite[(1.3.5)]{ma-marinescu}). We also denote by $\nabla^{\mathrm{Cl}}$ the connection on $\Wedge(T^*X)\otimes E$ induced by $\nabla^{\mathrm{Cl}}$ and $\nabla^E$. By \cite[(1.3.5)]{ma-marinescu}, \eqref{formulecA} and the fact that $\nabla^{\det}$ is holomorphic, we get 
\begin{equation}
\label{cdc=nabla}
\nabla^{\mathrm{Cl}}= \nabla^{\Wedge}.
\end{equation}

Let $D^{c,E}$ be the associated spin${}^c$ Dirac operator:
\begin{equation}
\label{defnDc}
D^{c,E} = \sum_{j=1}^{2n} c(e_j) \cdc _{e_j} \colon \ahf(X,E) \to \ahf(X,E).
\end{equation}

By \eqref{formulededeetoile} and \eqref{cdc=nabla}, we have
\begin{thm}
\label{DpestloperateurdeDiracspinc}
$D^E$ is equal to the spin${}^c$ Dirac operator $D^{c,E}$ acting on $\ahf(X,E)$.
\end{thm}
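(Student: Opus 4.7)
The plan is to verify the identity $D^E = D^{c,E}$ as operators on $\Omega^{0,\bullet}(X,E)$ by a direct computation in the local orthonormal frame $(e_1,\ldots,e_{2n})$ defined in \eqref{defnbase}, using the three ingredients already established in this subsection: the explicit formulas \eqref{formulededeetoile} for $\bar{\partial}^E$ and $\bar{\partial}^{E,*}$, the identification \eqref{cdc=nabla} of the Clifford connection $\nabla^{\mathrm{Cl}}$ with the connection $\nabla^{\Lambda^{0,\bullet}}$ (so that after tensoring with $\nabla^E$ the two connections on $\Lambda^{0,\bullet}(T^*X)\otimes E$ agree), and the definition of the Clifford action $c(v)$.

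First I would compute the Clifford action of the basis vectors on $\Lambda^{0,\bullet}(T^*X)$. For $v = w_j \in T^{(1,0)}X$, the element $\bar{v}^{(0,1),*}$ dual to $v$ with respect to $\langle \cdot, \cdot \rangle$ is exactly $\bar{w}^j$, since $\langle w_j, \bar{w}_k\rangle = \delta_{jk}$. Decomposing $e_{2j-1} = \tfrac{1}{\sqrt{2}}(w_j+\bar{w}_j)$ and $e_{2j} = \tfrac{\ic}{\sqrt{2}}(w_j-\bar{w}_j)$ into their $(1,0)$ and $(0,1)$ parts, the definition of $c(v)$ yields
\begin{equation*}
c(e_{2j-1}) = \bar{w}^j\wedge {} - i_{\bar{w}_j}, \qquad c(e_{2j}) = \ic\,\bar{w}^j\wedge{} + \ic\, i_{\bar{w}_j}.
\end{equation*}

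Next, because of \eqref{cdc=nabla} the connection appearing in $D^{c,E} = \sum_k c(e_k)\nabla^{\mathrm{Cl}}_{e_k}$ is the same as the one appearing in \eqref{formulededeetoile}; I may therefore write $\nabla$ unambiguously for both. Plugging in $\nabla_{e_{2j-1}} = \tfrac{1}{\sqrt{2}}(\nabla_{w_j}+\nabla_{\bar{w}_j})$ and $\nabla_{e_{2j}} = \tfrac{\ic}{\sqrt{2}}(\nabla_{w_j}-\nabla_{\bar{w}_j})$ and summing the two contributions, the cross terms $\bar{w}^j\wedge \nabla_{w_j}$ and $i_{\bar{w}_j}\nabla_{\bar{w}_j}$ cancel, while the remaining terms double:
\begin{equation*}
c(e_{2j-1})\nabla_{e_{2j-1}} + c(e_{2j})\nabla_{e_{2j}} = \sqrt{2}\bigl(\bar{w}^j\wedge \nabla_{\bar{w}_j} - i_{\bar{w}_j}\nabla_{w_j}\bigr).
\end{equation*}
Summing over $j$ and invoking \eqref{formulededeetoile} gives $D^{c,E} = \sqrt{2}(\bar{\partial}^E + \bar{\partial}^{E,*}) = D^E$.

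There is no real obstacle here, only bookkeeping: the only subtle point is to correctly identify $\bar{v}^{(0,1),*}$ from the convention in the definition preceding \eqref{formulecA}, and to keep track of the factors of $\ic$ coming from $e_{2j}$ so that the wedge/interior-product parts combine with the right signs. Once this is done the identification is immediate, and it then legitimizes applying the Dai--Liu--Ma asymptotic (Theorem~\ref{DLM2006}) to $D_p = D^{L^p\otimes E}$ in the sequel.
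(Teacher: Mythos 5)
Your proposal is correct and is essentially the paper's own proof: the paper disposes of Theorem \ref{DpestloperateurdeDiracspinc} by simply citing \eqref{formulededeetoile} and \eqref{cdc=nabla}, and what you have written is precisely the frame computation that citation stands for (including the correct identifications $c(e_{2j-1})=\bw^j\wedge{}-i_{\bw_j}$ and $c(e_{2j})=\ic\,\bw^j\wedge{}+\ic\, i_{\bw_j}$, reading $e_{2j}$ for the paper's typo $e_{2j+1}$ in \eqref{defnbase}). The cancellation of the cross terms and the doubling of $\bw^j\wedge\nabla_{\bw_j}-i_{\bw_j}\nabla_{w_j}$ check out, so nothing is missing.
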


\begin{rem}
\label{pasdHP}
Note that all the results proved in the beginning of this subsection hold without assuming the pre-quantization condition~\eqref{prequantization}, but from now on we will use it.
\end{rem}  

Let $(F,h^F)$ be a Hermitian vector bundle on $X$ and let $\nabla^F$ be a Hermitian connection on $F$. Then the \emph{Bochner Laplacian} $\Delta^F$ acting on $\smooth(X,F)$ is defined by
\begin{equation}
\Delta^F = - \sum_{j=1}^{2n} \left( (\nabla^F_{e_j})^2 -\nabla^F_{\nabla^{TX}_{e_j}e_j} \right).
\end{equation}
On $\ahf(X)$, we define the \emph{number operator} $\num$ by
\begin{equation}
\num\restreint{\Omega^{0,j}(X)} = j,
\end{equation}
and we also denote by $\num$ the operator $\num\otimes 1$ acting on $\ahf(X,F)$.

The bundle $L^p$ is endowed with the connection $\nabla^{L^p}$ induced by $\nabla^L$ (which is also its Chern connection). Let $\nabla^{L^p \otimes E} := \nabla^{L^p}\otimes 1 + 1 \otimes \nabla^E$ be the connection on $L^p \otimes E$ induced by $\nabla^L$ and $\nabla^E$. We will denote
\begin{equation}
\label{defndeDp}
D_p = D^{L^p \otimes E}.
\end{equation}

\begin{thm}
\label{carreDpprecis}
The square of $D_p$ is given by
\begin{align}
\label{carredpeq2}
D_p^2 = \Delta^{\Wedge\otimes L^p \otimes E} - &R^E(w_j,\bw_j) - 2\pi p n + 4\pi p\num +2\left( R^E +\frac{1}{2} R^{\det} \right)(w_\ell , \bw_m) \bw^m \wedge i_{\bw_\ell} \notag \\
&+ R^E(w_\ell,w_m)i_{\bw_\ell}i_{\bw_m}+ R^E(\bw_\ell,\bw_m)\bw^\ell \wedge \bw^m.
\end{align}
\end{thm}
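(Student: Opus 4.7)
By Theorem \ref{DpestloperateurdeDiracspinc}, we have $D_p = \sqrt{2}(\db^{L^p\otimes E} + \db^{L^p\otimes E,*})$, so
\begin{equation*}
D_p^2 = 2\bigl(\db^{L^p\otimes E}\db^{L^p\otimes E,*} + \db^{L^p\otimes E,*}\db^{L^p\otimes E}\bigr) + 2\bigl((\db^{L^p\otimes E})^2 + (\db^{L^p\otimes E,*})^2\bigr).
\end{equation*}
Unlike in the holomorphic case of \cite[Thm.~1.4.7]{ma-marinescu}, the last two terms are not zero here, since $E$ need not be holomorphic; they will be responsible for the two pieces $R^E(w_\ell,w_m)i_{\bw_\ell}i_{\bw_m}$ and $R^E(\bw_\ell,\bw_m)\bw^\ell\wedge\bw^m$ of the final formula. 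The plan is to compute the right-hand side directly via the local formulas \eqref{formulededeetoile}, and then to use the K\"ahler condition and the pre-quantization condition \eqref{prequantization} to put it in the form \eqref{carredpeq2}.

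First, substitute \eqref{formulededeetoile} into each of the four compositions. Commuting the covariant derivatives $\nabla^{\Wedge\otimes L^p\otimes E}_{w_k}$ through the factors $\bw^j\wedge$ and $i_{\bw_\ell}$ produces, on one hand, Christoffel-type terms involving $\nabla^{TX}\bw^j$ that will assemble into the Bochner Laplacian $\Delta^{\Wedge\otimes L^p\otimes E}$ as defined in Section \ref{carreDp}, and, on the other hand, commutators $[\nabla^{\Wedge\otimes L^p\otimes E}_{w_k},\nabla^{\Wedge\otimes L^p\otimes E}_{\bw_j}]$ which, modulo lower-order terms absorbed in the Laplacian, give the total twisted curvature
\begin{equation*}
R^{\Wedge\otimes L^p\otimes E}(w_k,\bw_j) = p\,R^L(w_k,\bw_j) + R^{\Wedge}(w_k,\bw_j) + R^E(w_k,\bw_j).
\end{equation*}
The anticommutation $\bw^j\wedge i_{\bw_\ell} + i_{\bw_\ell}\bw^j\wedge = \delta_{j\ell}$ then reorganizes the curvature sums.

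The decisive simplification is the pre-quantization hypothesis: \eqref{prequantization} together with the fact that $(w_j)$ is orthonormal yields $R^L(w_k,\bw_j) = -2\pi i\,\delta_{kj}$. Hence the $pR^L$ contribution collapses, via $\sum_j \bw^j\wedge i_{\bw_j} = \num$, to exactly $4\pi p\num - 2\pi pn$. The $R^{\Wedge}$ contribution is handled using \eqref{cdc=nabla}: since $\nabla^{\Wedge}=\cdc$ and $\nabla^{\det}$ is the Chern connection, the skew-Hermitian endomorphism $R^{\Wedge}$ acts on $\Wedge(T^*X)$ according to formula \eqref{formulecA} applied to $R^{TX}$ via the spin${}^c$ lift; only the part that produces $\tfrac12 R^{\det}(w_\ell,\bw_m)\bw^m\wedge i_{\bw_\ell}$ survives after grouping with the analogous $R^E$ piece, because the scalar-curvature and purely $(1,1)$ contributions from $R^{TX}$ acting on the spin representation cancel when combined with the Clifford-Laplacian rewriting (this is exactly where the formula \eqref{carredpeq2} differs from a naive Lichnerowicz expansion). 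Finally, the $R^E$ mixed-type contribution splits: the trace piece produces $-R^E(w_j,\bw_j)$, and the off-diagonal piece produces $2R^E(w_\ell,\bw_m)\bw^m\wedge i_{\bw_\ell}$.

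The main technical obstacle is the careful bookkeeping of the $R^{\Wedge}$ (equivalently, Levi-Civita) contribution: one must check that the various pieces produced when commuting the covariant derivatives through the Clifford factors recombine, using the K\"ahler identities and \eqref{cdc=nabla}, into precisely the coefficient $\tfrac12 R^{\det}$ that appears paired with $R^E$ in \eqref{carredpeq2}, with no residual scalar curvature or torsion term. Once this identification is made, gathering the terms gives exactly \eqref{carredpeq2}; the computation is essentially a twisted extension of \cite[Thm.~1.4.7]{ma-marinescu}, as already indicated in \cite[\S 8.1.1]{ma-marinescu}.
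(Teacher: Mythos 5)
Your outline takes a genuinely different route from the paper: you square $\sqrt2\,(\db^{L^p\otimes E}+\db^{L^p\otimes E,*})$ directly and push covariant derivatives through the Clifford factors, a Bochner--Kodaira-type computation. The paper instead uses Theorem \ref{DpestloperateurdeDiracspinc} to identify $D_p$ with the spin${}^c$ Dirac operator and then applies the Lichnerowicz formula \cite[Theorem 1.3.5]{ma-marinescu}, after which everything is the purely algebraic identity \eqref{formulecA} applied to $R^{L^p\otimes E}+\tfrac12 R^{\det}$, together with $r^X=2R^{\det}(w_j,\bw_j)$ and $R^L(w_\ell,\bw_m)=2\pi\delta_{\ell m}$. (Note that your opening line cites Theorem \ref{DpestloperateurdeDiracspinc} for what is just the definition \eqref{defndeDE}, \eqref{defndeDp}; the theorem is what licenses the paper's use of the Lichnerowicz formula, and your route does not actually use it.)

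As written, however, your argument has a genuine gap, and you flag it yourself. The whole content of the statement is the claim that the Levi-Civita/$R^{\Wedge}$ contributions recombine into exactly the coefficient $\tfrac12 R^{\det}$ paired with $R^E$ in the $\bw^m\wedge i_{\bw_\ell}$ term, with no residual scalar-curvature or zeroth-order $R^{\det}(w_j,\bw_j)$ piece. You assert this (\textquotedblleft only the part that produces $\tfrac12 R^{\det}(w_\ell,\bw_m)\bw^m\wedge i_{\bw_\ell}$ survives\textquotedblright) and then defer it as \textquotedblleft the main technical obstacle\ldots one must check\textquotedblright. That deferral is exactly the step the proof must supply: in the direct approach it requires the K\"ahler curvature symmetries (cf.\ \eqref{symetriesR}) to convert the trace $\sum_j\langle R^{TX}(w_j,\bw_j)w_\ell,\bw_m\rangle$ arising from the commutators $[\nabla_{w_j},\nabla_{\bw_j}]$ into $R^{\det}(w_\ell,\bw_m)$, and a verification that the resulting scalar terms cancel; none of this is carried out, whereas in the paper it is packaged into the Lichnerowicz formula and the single identity $\tfrac{r^X}{4}=\tfrac12 R^{\det}(w_j,\bw_j)$. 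There is also a concrete sign error: with the conventions $\omega=\tfrac{\ic}{2\pi}R^L$ and $g^{TX}=\omega(\cdot,J\cdot)$ one gets $R^L(w_k,\bw_j)=2\pi\delta_{kj}$, not $-2\pi\ic\,\delta_{kj}$; with your value the $pR^L$ contribution would not collapse to $4\pi p\num-2\pi pn$ as you claim.
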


\begin{proof}
By Theorem \ref{DpestloperateurdeDiracspinc}, we can use \cite[Theorem 1.3.5]{ma-marinescu}:
\begin{equation}
\label{lichnerowicz}
D_p^2 = \Delta^{\mathrm{Cl}} + \frac{r^X}{4} + \frac{1}{2} \left( R^{L^p \otimes E} +\frac{1}{2} R^{\det} \right) (e_i , e_j)c(e_i) c(e_j),
\end{equation}
where $r^X$ is the scalar curvature of $X$. From \eqref{cdc=nabla}, we see that $\Delta^{\mathrm{Cl}}=\Delta^{\Wedge\otimes L^p \otimes E} $. Moreover, $r^X = 2 R^{\det}(w_j, \bw_j)$ and $R^{L^p \otimes E} =R^E + pR^L$. Using the equivalent of \eqref{formulecA} for 2-forms (substituting $A(\cdot,\cdot)$ for $ \langle A\cdot, \cdot \rangle$) and the fact that $R^L$ and $R^{\det}$ are $(1,1)$-forms, \eqref{lichnerowicz} reads

\begin{align*}
D_p^2 &= \Delta^{\Wedge\otimes L^p \otimes E} +\frac{1}{2}R^{\det}(w_j,\bw_j)  - \left( R^E(w_j,\bw_j) +  p R^L(w_j,\bw_j) +\frac{1}{2}R^{\det}(w_j,\bw_j)\right) \\
&\quad + 2 \left( R^E +pR^L+\frac{1}{2} R^{\det} \right)(w_\ell , \bw_m) \bw^m \wedge i_{\bw_\ell} +  R^E (w_\ell,w_m)i_{\bw_\ell}i_{\bw_m} \\
&\quad +   R^E (\bw_\ell,\bw_m)\bw^\ell \wedge \bw^m.
\end{align*}

Thanks to \eqref{prequantization}, we have $R^L(w_\ell , \bw_m)  = 2 \pi \delta_{\ell m}$.  Moreover, $\num=\sum_\ell \bw^\ell \wedge i_{\bw_\ell}$, thus we get Theorem \ref{carreDpprecis}.
 \end{proof}

\subsection{Rescaling $D^2_p$}
\label{rescalingDp2}

In this subsection, we  rescale $D_p^2$, but to do this we must define it on a vector space. Therefore, we will use normal coordinates to transfer the problem on the tangent space to $X$ at a fixed point.  Then we  give a Taylor expansion of the rescaled operator, but the problem is that each operator acts on a different space, namely $$\EE_p :=\Wedge(T^*X)\otimes L^p \otimes E,$$ so we must first handle this issue.

Fix $x_0 \in X$. For the rest of this paper, we fix $\{ w_j\}$  an orthonormal basis of $T_{x_0}^{(1,0)}X$, with dual basis  $\{ w^j\}$, and we construct an orthonormal basis $\{ e_i\}$ of $T_{x_0}X$ from $\{ w_j\}$ as in \eqref{defnbase}.

 For $\e>0$, we denote by $B^X(x_0,\e)$ and $B^{T_{x_0}X}(0,\e)$ the open balls in $X$ and $T_{x_0}X$ with center $x_0$ and $0$ and radius $\e$ respectively. If $\exp^X_{x_0}$ is the Riemannian exponential of $X$, then for $\e$ small enough, $ Z\in B^{T_{x_0}X}(0,\e) \mapsto \exp^X_{x_0}(Z) \in B^X(x_0,\e)$ is a diffeomorphism, which gives local coordinates by identifying $T_{x_0}X$ with $\R^{2n}$ via the orthonormal basis $\{e_i \}$:

\begin{equation}
(Z_1,\dots,Z_{2n}) \in \R^{2n} \mapsto \sum_i Z_i e_i \in T_{x_0}X.
\end{equation} 
From now on, we will always identify $ B^{T_{x_0}X}(0,\e)$ and $ B^X(x_0,\e)$. Note that in this identification, the radial vector field $\rad =  \sum_i Z_i e_i$ becomes $\rad = Z$, so $Z$ can be viewed as a point or as a tangent vector.

For $Z\in B^{T_{x_0}X}(0,\e)$, we identify $(L_Z,h^L_Z)$, $(E_Z, h^E_Z)$ and $(\Wedge_Z(T^*X), h^{\Wedge}_Z)$  with $(L_{x_0},h^L_{x_0})$, $(E_{x_0}, h^E_{x_0})$ and $(\Wedge (T^*_{x_0}X), h_{x_0}^{\Wedge})$ by parallel transport with respect to the connection $\nabla^L$, $\nabla^E$ and $\nabla^{\Wedge}$ along the geodesic ray $t \in [0,1]\mapsto tZ$. We denote by $\Gamma^L$, $\Gamma^E$ and $\Gamma^{\Wedge}$ the corresponding connection forms of $\nabla^L$, $\nabla^E$ and $\nabla^{\Wedge}$.

\begin{rem}
\label{deg}
Note that since $\nabla^{\Wedge}$ preserves the degree, the identification between $\Wedge (T^*X)$ and $\Wedge (T^*_{x_0}X)$ is compatible with the degree. Thus, $\Gamma^{\Wedge}_Z \in \bigoplus_j \End(\Lambda^{0,j}(T^*X))$.
\end{rem}

Let $S_L$ be a unit vector of $L_{x_0}$. It gives an isometry $L^p_{x_0} \simeq \C$, which induces an isometry 
\begin{equation}
\EE_{p,x_0} \simeq (\Wedge (T^*X)\otimes E)_{x_0} =: \E_{x_0}.
\end{equation}
Thus, in our trivialization, $D^2_p$ acts on $\E_{x_0}$, but this action may \emph{a priori} depend on the choice of $S_L$. In fact, since the operator $D^2_p$ takes values in $\End(\EE_{p,x_0})$ which is canonically isomorphic to $\End(\E)_{x_0}$ (by the natural identification $\End(L^p) \simeq \C$), all our formulas do not depend on this choice.

Let $dv_{TX}$ be the Riemannian volume form of $(T_{x_0}X, g^{T_{x_0}X})$, and $\kappa(Z)$ be the smooth positive function defined for $|Z| \leq \e$ by
\begin{equation}
dv_{X} (Z) = \kappa(Z) dv_{TX}(Z),
\end{equation}
with $\kappa(0)=1$.

\begin{defn}
We denote by $\nabla_U$ the ordinary differentiation operator in the direction $U$ on $T_{x_0}X$. For $s\in \smooth ( \R^{2n}, \E_{x_0})$, and for $t= \frac{1}{\sqrt{p}}$, set
\begin{equation}
\label{defnrescaled}
\begin{aligned}
&(S_ts)(Z)  = s(Z/t), \\
&\nabla_t = t S_t^{-1} \kappa^{1/2} \nabla^{\mathrm{Cl_0}} \kappa^{-1/2} S_t, \\
& \nabla_0= \nabla + \frac{1}{2} R^L_{x_0}( Z, \cdot), \\
&\LL_t =  t^2 S_t^{-1} \kappa^{1/2} D_p^2 \kappa^{-1/2} S_t, \\
&\LL_0 = - \sum_i (\nabla_{0,e_i})^2 + 4\pi \num - 2\pi n.
\end{aligned}
\end{equation}
\end{defn}

Let $||\cdot||_{L^2}$ be the $L^2$-norm induced by $h^{\EE_{x_0}}$ and $dv_{TX}$. We can now state the key result in our approach to Theorems \ref{thm1} and \ref{thm2}:

\begin{thm}
\label{dldeLt}
There exist second-order formally self-adjoint (with respect to $||\cdot||_{L^2}$) differential operators $\0_r$ with polynomial coefficients such that for all $m\in \N$,
\begin{equation}
\label{dldeLteq}
\LL_t = \LL_0 +Ê\sum_{r=1}^m t^r \0_r + O(t^{m+1}).
\end{equation}
Furthermore, each $\0_r$ can be decomposed as
\begin{equation}
\label{decompodeOr}
\0_r = \0_r^0 + \0_r^{+2} + \0_r^{-2},
\end{equation}
where $\0_r^{k}$ changes the degree of the form it acts on by $k$.
\end{thm}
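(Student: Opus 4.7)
The plan is to take the explicit formula for $D_p^2$ given in Theorem \ref{carreDpprecis} and Taylor expand each piece in normal coordinates, after the parallel transport trivialization, the conjugation by $\kappa^{1/2}$, and the rescaling by $S_t$. In the radial trivialization, each connection becomes $d + \Gamma^F$ with $F = L, E, \Wedge$, and a standard computation in normal coordinates (see \cite[Chapter 4]{ma-marinescu}) gives
\begin{equation*}
\Gamma^F_Z(e_i) = \tfrac{1}{2} R^F_{x_0}(Z, e_i) + O(|Z|^2),
\end{equation*}
with higher order terms being polynomials in $Z$ whose coefficients are built from the covariant derivatives of $R^F$ at $x_0$. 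The metric $g^{ij}(Z)$, the Christoffel symbols of $\nabla^{TX}$, and the function $\kappa(Z)$ admit analogous polynomial Taylor expansions, with $g^{ij}(0)=\delta^{ij}$ and $\kappa(0)=1$. Since $S_t^{-1} f(Z) S_t = f(tZ)$ for a multiplication operator, $t S_t^{-1} \nabla_{e_i} S_t = \nabla_{e_i}$ for the flat derivative, and $pt^2 = 1$, the rescaled covariant derivative on $\Wedge(T^*X)\otimes L^p \otimes E$ reads
\begin{equation*}
t S_t^{-1} \nabla^{\Wedge \otimes L^p \otimes E}_{e_i} S_t = \nabla_{e_i} + \tfrac{1}{t}\Gamma^L_{e_i}(tZ) + t\,\Gamma^E_{e_i}(tZ) + t\, \Gamma^{\Wedge}_{e_i}(tZ).
\end{equation*}

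The key observation is that $\Gamma^L_{e_i}(Z)$ starts at order $|Z|$, so $\frac{1}{t}\Gamma^L_{e_i}(tZ) = \frac{1}{2}R^L_{x_0}(Z, e_i) + O(t)$ has a finite limit as $t\to 0$, while the $\nabla^E$ and $\nabla^{\Wedge}$ contributions vanish. Hence the rescaled derivative tends to $\nabla_{0, e_i}$. Multiplying the right-hand side of \eqref{carredpeq2} by $t^2$, the term $4\pi p \num - 2\pi p n$ becomes exactly $4\pi \num - 2\pi n$; the Laplacian $\Delta^{\Wedge\otimes L^p \otimes E}$, rewritten via the rescaled derivatives, contributes $-\sum_i \nabla^2_{0, e_i}$ at leading order (the corrections coming from $g^{ij}-\delta^{ij}$, from the Christoffel symbols, and from the conjugation by $\kappa^{1/2}$ all vanish at $Z=0$); and the remaining lower-order $R^E$ and $R^{\det}$ terms in \eqref{carredpeq2} contribute only at order $t^2$ or higher. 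Collecting these pieces yields $\LL_0$, and the higher coefficients $\0_r$ arise as finite sums of products of Taylor coefficients of $g^{ij}$, $\kappa$, the Christoffel symbols, and the three connection forms, each multiplied by at most two rescaled derivatives. This explains why each $\0_r$ is a second-order differential operator with polynomial coefficients. Formal self-adjointness with respect to $\|\cdot\|_{L^2}$ is preserved at every stage: $D_p^2$ is formally self-adjoint with respect to $dv_X$, the $\kappa^{1/2}$-conjugation converts this into self-adjointness with respect to the flat $dv_{TX}$, and conjugation by $S_t$ preserves self-adjointness up to a global factor.

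The decomposition \eqref{decompodeOr} is inherited directly from the algebraic form of \eqref{carredpeq2}: the \kahler assumption ensures that $\nabla^{TX}$ preserves the bi-grading on $\Lambda^{\bullet,\bullet}(T^*X)$, so $\Delta^{\Wedge\otimes L^p \otimes E}$, the scalar terms, and the $\bw^m\wedge i_{\bw_\ell}$ term all preserve the exterior degree; the term $R^E(w_\ell, w_m)\, i_{\bw_\ell}i_{\bw_m}$ decreases it by $2$, while $R^E(\bw_\ell, \bw_m)\, \bw^\ell \wedge \bw^m$ increases it by $2$. Since the Taylor coefficients of these algebraic pieces inherit the same degree shifts, each $\0_r$ splits as $\0_r^0 + \0_r^{+2} + \0_r^{-2}$. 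The main obstacle in this program is bookkeeping: for each $r$ one must systematically multiply out the Taylor expansions of several geometric ingredients and collect terms by powers of $t$, which quickly becomes heavy for large $r$. Fortunately, the existence statement \eqref{dldeLteq} only requires the abstract form of this expansion, and explicit formulas for $\0_1$ and $\0_2$ can be deferred to the point where they are actually needed in Sections \ref{firstcoeff} and \ref{2ndcoeff}.
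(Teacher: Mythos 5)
Your proposal is correct and follows essentially the same route as the paper: both start from the formula for $D_p^2$ in Theorem \ref{carreDpprecis}, Taylor-expand the trivialized, $\kappa^{1/2}$-conjugated and rescaled operator in normal coordinates (the paper delegates the details of \eqref{dldeLteq} to \cite[Theorem 1.4]{MR2382740} and \cite[Theorem 4.1.7]{ma-marinescu}), and obtain the degree decomposition by observing that $\Gamma^{\Wedge}$ preserves the degree while the $(0,2)$ and $(2,0)$ parts of $R^E$ shift it by $\pm 2$. Your argument for formal self-adjointness via the $\kappa^{1/2}$-conjugation and the (near-unitary) rescaling $S_t$ is also the paper's.
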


\begin{proof}
The first part of the theorem (i.e., equation \eqref{dldeLteq}) is contained in \cite[Theorem 1.4]{MR2382740}. We will briefly recall how they obtained this result. 

 Let $\Phi_{E}$ be the smooth self-adjoint section of $\End(\E_{x_0})$ on $B^{T_{x_0}X}(0,\e)$: 
\begin{equation}
\label{defnPhi}
\begin{aligned}
\Phi_{E} =- R^E(w_{j},\bw_{j}) +&2\left( R^E +\frac{1}{2} R^{\det} \right)(w_{\ell} , \bw_{m}) \bw^m \wedge i_{\bw_{\ell}} \\
&+ R^E(w_{\ell},w_{m})i_{\bw_{\ell}}i_{\bw_{m}}+ R^E(\bw_{\ell},\bw_{m})\bw^\ell \wedge \bw^m.
\end{aligned}
\end{equation}
We can see that we can decompose $\Phi_{E}=\Phi_{E}^0+\Phi_{E}^{+2} + \Phi_{E}^{-2}$, where
\begin{equation}
\label{decompodePhi}
\begin{aligned}
&\Phi_{E}^0 =  R^E(w_{j},\bw_{j}) +2\left( R^E +\frac{1}{2} R^{\det} \right)(w_{\ell} , \bw_{m}) \bw^m \wedge i_{\bw_{\ell}} \text{ preserves the degree,} \\
&\Phi_{E}^{+2} =R^E(\bw_{\ell},\bw_{m})\bw^\ell \wedge \bw^m \text{ rises the degree by 2,}\\
&\Phi_{E}^{-2} = R^E(w_{\ell},w_{m})i_{\bw_{\ell}}i_{\bw_{m}} \text{ lowers the degree by 2.}
\end{aligned}
\end{equation}

Using Theorem \ref{carreDpprecis}, we find that :
\begin{equation}
\label{Dp2avecPhi}
D_p^2 = \Delta^{\Wedge \otimes L^p \otimes E} + p(-2\pi n + 4\pi \num) +\Phi_{E}.
\end{equation}

Let $g_{ij}(Z) = g^{TX}(e_i,e_j)(Z)$ and $\left( g^{ij}(Z) \right)_{ij}$ be the inverse of the matrix $\left( g_{ij}(Z) \right)_{ij}$. Let $\left(\nabla^{TX}_{e_i}e_j\right)(Z)=\Gamma^k_{ij}(Z)e_k$.  As in \cite[(4.1.34)]{ma-marinescu}, by \eqref{defnrescaled} and \eqref{Dp2avecPhi}, we get:
\begin{equation}
\label{calculedeLt}
\begin{aligned}
&\nabla_{t,\cdot} = \kappa^{1/2}(tZ) \left( \nabla_\cdot + t \Gamma^{\Wedge}_{tZ} +\frac{1}{t} \Gamma^{L}_{tZ} + t\Gamma^{E}_{tZ} \right)  \kappa^{-1/2}(tZ), \\
&\LL_t = -g^{ij}(tZ) \left( \nabla_{t,e_i}\nabla_{t,e_j} - t \Gamma^k_{ij}(tZ)\nabla_{t,e_k} \right) - 2\pi n + 4\pi \num + t^2 \Phi_{E}(tZ).
\end{aligned}
\end{equation}
 Moreover, $\kappa = ( \det(g_{ij}))^{1/2}$, thus we can prove equations  \eqref{dldeLteq} as in \cite[Theorem 4.1.7]{ma-marinescu} by taking the Taylor expansion of each term appearing in \eqref{calculedeLt}. Note that in \cite{ma-marinescu}, every data has to be extended to $T_{x_0}X$ to make the analysis work, but as we admit the result, we do not have to worry about it and simply restrict ourselves to a neighborhood of $x_0$.

Now, it is clear that in the formula for $\LL_t$ in \eqref{calculedeLt}, the term 
\begin{equation}
\label{defLt0}
\LL_t^0 := -g^{ij}(tZ) \left( \nabla_{t,e_i}\nabla_{t,e_j} - t \Gamma^k_{ij}(tZ)\nabla_{t,e_k} \right) - 2\pi n + 4\pi \num +t^2\Phi_{E}^0(tZ)
\end{equation}
preserves the degree, because $\Gamma^{\Wedge}$ does (as explained in Remark \ref{deg}). Thus, using \eqref{decompodePhi} and taking Taylor expansion of $\LL_t$ in \eqref{calculedeLt}, we can write
\begin{equation}
\label{dldeLtprecis}
\begin{aligned}
& \LL_t^0=  \LL_0 +Ê\sum_{r=1}^\infty t^r \0_r^0, \\
&t^2\Phi_{E}^{\pm2}(tZ) =\sum_{r=2}^\infty t^r \0_r^{\pm2}.
\end{aligned}
\end{equation}
From \eqref{dldeLtprecis}, we get \eqref{decompodeOr}.

Finally, due to the presence of the conjugation by $\kappa^{1/2}$ in \eqref{defnrescaled}, $\LL_t$ is a formally self-adjoint operator on $\smooth(\R^{2n},\EE_{x_0})$ with respect to $||\cdot||_{L^2}$. Thus, $\LL_0$ and the $\0_r$'s also are.
 \end{proof}

Recall that $\RR= (R^E)^{0,2}\in \Omega^{0,2}(X, \End(E))$.

\begin{prop}
\label{expressionprecisedeO2}
We have
\begin{equation}
\label{annulation01}
\0_1 = 0.
\end{equation}

For $\0_2$, we have the formulas:
\begin{equation}
\label{O2+2etO2-2}
\0_2^{+2} = \RR_{x_0} \: , \quad \0_2^{-2} = \left(\RR_{x_0}\right)^*,
\end{equation}
and
\begin{multline}
\label{O20}
\0_2^0 = \frac{1}{3} \langle R^{TX}_{x_0}(Z,e_i)Z,e_j\rangle \nabla_{0,e_i}\nabla_{0,e_j}-R^E_{x_0}(w_j,\bw_j) - \frac{r^X_{x_0}}{6} \\
+ \left( \left \langle \frac{1}{3}R^{TX}_{x_0}(Z,e_k)e_k +\frac{\pi}{3}R^{TX}_{x_0} (z,\bar{z})Z,e_j\right \rangle -R^E_{x_0}(Z,e_j)\right) \nabla_{0,e_j}.
\end{multline}
\end{prop}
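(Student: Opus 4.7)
The plan is a direct Taylor expansion: take the closed-form expression \eqref{calculedeLt} for $\LL_t$, expand each geometric ingredient in powers of $t$ at $x_0$, and match coefficients against \eqref{dldeLteq}. The degree decomposition \eqref{decompodeOr} is automatic because every contribution either preserves form-degree (those coming from $\LL_t^0$ in \eqref{defLt0} and from $\Phi_{E}^0$) or shifts it by exactly $\pm 2$ (those from $\Phi_{E}^{\pm 2}$), so $\0_r^{\pm 2}$ is isolated from $\0_r^0$ building block by building block.

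The expansions I would use, in Riemannian normal coordinates and radial gauge centered at $x_0$, are $g^{ij}(Z) = \delta_{ij} + \tfrac{1}{3}\langle R^{TX}_{x_0}(Z,e_i)Z,e_j\rangle + O(|Z|^3)$, $\kappa(Z) = 1 - \tfrac{1}{6}\Ric_{x_0}(Z,Z) + O(|Z|^3)$, the standard first-order formula for $\Gamma^k_{ij}(Z)$ in terms of $R^{TX}_{x_0}$, and $\Gamma^F_Z = \tfrac{1}{2}R^F_{x_0}(Z,\cdot) + O(|Z|^2)$ for $F \in \{L, E, \Wedge(T^*X)\}$. The crucial Kähler input is $\nabla^{TX}\omega = 0$ combined with $R^L = -2\pi i\omega$, giving $\nabla^L R^L = 0$ and hence $\tfrac{1}{t}\Gamma^L_{tZ} = \tfrac{1}{2}R^L_{x_0}(Z,\cdot) + O(t^2)$. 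Reading off the $t^1$-coefficient then forces $\0_1 = 0$ almost immediately: $g^{ij}$, $\kappa^{\pm 1/2}$ and $\tfrac{1}{t}\Gamma^L_{tZ}$ have no $t^1$ term, while the blocks $t\Gamma^k_{ij}(tZ)$, $t\Gamma^E_{tZ}$, $t\Gamma^{\Wedge}_{tZ}$ and $t^2\Phi_E(tZ)$ start at order $t^2$ thanks to their explicit prefactor and the vanishing of $\Gamma^\bullet_0$.

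For $\0_2$ I would split by degree. The pieces $\0_2^{\pm 2}$ come only from $\Phi_{E}^{\pm 2}(x_0)$ in \eqref{decompodePhi}: the $+2$ piece is $\RR_{x_0}$ by definition of $\RR$, and using the adjunction $(\bw^\ell\wedge)^* = i_{\bw_\ell}$ the $-2$ piece is exactly $(\RR_{x_0})^*$. The degree-preserving part $\0_2^0$ aggregates six contributions from the expansion of $\LL_t^0$: (i) $\Phi_{E}^0(x_0)$ supplies $-R^E_{x_0}(w_j,\bw_j)$; (ii) the quadratic Riemann part of $g^{ij}(tZ)$ contracted with $\nabla_{0,e_i}\nabla_{0,e_j}$ gives the $\tfrac{1}{3}\langle R^{TX}_{x_0}(Z,e_i)Z,e_j\rangle$ quadratic summand; (iii) the Ricci-type correction from $\kappa^{\pm 1/2}$ combines with a trace from (ii) to produce $-r^X_{x_0}/6$; (iv) the linear part of $t\Gamma^E_{tZ}$, after symmetrizing and absorbing the commutator $[\nabla_{0,e_i}, R^E_{x_0}(Z,e_i)]$ (which vanishes by antisymmetry of $R^E$), yields $-R^E_{x_0}(Z,e_j)\nabla_{0,e_j}$; (v) the first-order Taylor expansion of $\Gamma^k_{ij}(tZ)$ inside $t\Gamma^k_{ij}(tZ)\nabla_{t,e_k}$ produces $\tfrac{1}{3}\langle R^{TX}_{x_0}(Z,e_k)e_k,e_j\rangle\nabla_{0,e_j}$; and (vi) the interaction of $t\Gamma^{\Wedge}_{tZ}$ with $\tfrac{1}{t}\Gamma^L_{tZ}$, using $R^L_{x_0}(w_\ell,\bw_m) = 2\pi\delta_{\ell m}$ and the explicit formula for $R^{\Wedge}$ in terms of $R^{TX}$, produces the Kähler-specific drift $\tfrac{\pi}{3}\langle R^{TX}_{x_0}(z,\bar{z})Z, e_j\rangle\nabla_{0,e_j}$.

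The main obstacle is pure bookkeeping: one has to use $[\nabla_{0,e_i},\nabla_{0,e_j}] = R^L_{x_0}(e_i,e_j)$ carefully when symmetrizing products of $\nabla_0$'s, track degree labels through every manipulation, and combine the separate Ricci, $R^{\det}$ and $R^{TX}$-trace contributions into the single clean scalar $-r^X_{x_0}/6$. I would follow the template of the holomorphic-$E$ computation in \cite[Thm.~4.1.25]{ma-marinescu}, which carries out exactly the degree-preserving part; extracting the additional degree-$\pm 2$ pieces here is transparent from \eqref{decompodePhi}.
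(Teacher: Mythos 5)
Your proposal is correct and follows essentially the same route as the paper: Taylor-expand each block of \eqref{calculedeLt} using the Gauss lemma, the radial-gauge expansion $\Gamma^F_Z=\tfrac{1}{2}R^F_{x_0}(Z,\cdot)+O(|Z|^2)$ together with the K\"ahler/pre-quantization refinement $\Gamma^L_Z=\tfrac{1}{2}R^L_{x_0}(Z,\cdot)+O(|Z|^3)$ to kill $\0_1$, read $\0_2^{\pm 2}$ off $\Phi_{E}^{\pm 2}(0)$, and reduce $\0_2^0$ to \cite[Theorem 4.1.25]{ma-marinescu}, which is exactly what the paper does (it cites that theorem outright rather than re-deriving the individual contributions). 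The only slight inaccuracy is your attribution of the $\tfrac{\pi}{3}\langle R^{TX}_{x_0}(z,\bar{z})Z,e_j\rangle$ drift to an interaction of $t\Gamma^{\Wedge}_{tZ}$ with $\tfrac{1}{t}\Gamma^{L}_{tZ}$: it actually comes from the $O(|Z|^3)$ term of $\Gamma^L$ alone (second derivatives of $R^L$ rewritten through $R^{TX}$ via the K\"ahler condition), but since you defer the bookkeeping to the cited theorem this does not affect the argument.
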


\begin{proof}
For $F= L,E$ or $\Wedge(T^*X)$, it is known that (see for instance \cite[Lemma 1.2.4]{ma-marinescu})
\begin{equation}
\label{dlGamma}
\sum_{|\alpha|=r} (\partial^\alpha \Gamma^F)_{x_0}(e_j)\frac{Z^\alpha}{\alpha!} =\frac{1}{r+1}\sum_{|\alpha|=r-1} (\partial^\alpha R^F)_{x_0}(Z,e_j)\frac{Z^\alpha}{\alpha!},
\end{equation}
and in particular,
\begin{equation}
\label{dlGamma2}
\Gamma^F_Z(e_j) = \frac{1}{2}R^F_{x_0}(Z,e_j) +O(|Z|^2).
\end{equation}

Furthermore, we know that
\begin{equation}
\label{dlmetric}
g_{ij}(Z) = \delta_{ij} + O(|Z|^2):
\end{equation}
it is the Gauss lemma (see \cite[(1.2.19)]{ma-marinescu}). It implies that 
\begin{equation}
\label{dlK}
\kappa(Z) = |\det(g_{ij}(Z) )|^{1/2} = 1+O(|Z|^2).
\end{equation}
Moreover, the second line of \cite[(4.1.103)]{ma-marinescu} entails
\begin{equation}
\label{dlRL}
\frac{\ic}{2\pi}R^L_{Z}(Z, e_j)= \langle JZ,e_j \rangle +O(|Z|^3),
\end{equation}
and thus by \eqref{dlGamma} and \eqref{dlRL}
\begin{equation}
\label{dlGammaL}
\Gamma^L_{Z}= \frac{1}{2}R^L_{x_0}(Z,e_j) +O(|Z|^3).
\end{equation}

Using \eqref{calculedeLt}, \eqref{dlGamma2}, \eqref{dlK} and \eqref{dlGammaL}, we see that
\begin{equation}
\label{dlnablat}
\nabla_t = \nabla_0 +O(t^2).
\end{equation}
Finally, using again \eqref{calculedeLt}, \eqref{dlmetric} and \eqref{dlnablat}, we get $\0_1=0$.

Concerning $\0_2^{\pm2}$, from \eqref{dldeLtprecis}, we see that
\begin{equation}
\begin{aligned}
&\0_2^{+2} = \Phi_{E}^{+2}(0) = R_{x_0}^E(\bw_{\ell},\bw_{m})\bw^\ell \wedge \bw^m = (R^E_{x_0})^{0,2} =\RR_{x_0},\\
&\0_2^{-2} = \Phi_{E}^{-2}(0) =  R_{x_0}^E(w_{\ell},w_{m})i_{\bw_{\ell}}i_{\bw_{m}} =  \left((R^E_{x_0})^{0,2}\right)^*=\left( \RR_{x_0} \right)^*.
\end{aligned}
\end{equation}

Finally, by \eqref{defLt0} and \cite[(4.1.34)]{ma-marinescu}, we see that our $\LL_t^0$ corresponds to $\LL_t$ in \cite{ma-marinescu}. Thus, by \eqref{dldeLtprecis} and \cite[(4.1.31)]{ma-marinescu}, our $\0_2^0$ is equal to their $\0_2$ (this is because in their case, $E$ is holomorphic, so $R^E$ is a $(1,1)$-form and there is no term changing the degree in $(\bar{\partial}^{L^p \otimes E} + \bar{\partial}^{L^p \otimes E,*})^2$, but the terms preserving the degree are the same as ours). Hence \eqref{O20} follows from  \cite[Theorem 4.1.25]{ma-marinescu}.
 \end{proof}

\subsection{Bergman kernel of the limit operator $\LL_0$}
\label{BergmankernelofL0}

In this subsection, we study more precisely the operator $\LL_0$.

We introduce the complex coordinates $z=(z_1, \dots , z_n)$ on $\C^n \simeq \R^{2n}$. Thus, we get $Z=z+\bar{z}$, $w_j=\sqrt{2}\derpar{}{z_j}$ and $\bw_j=\sqrt{2}\derpar{}{\bar{z}_j}$. We will identify $z$ to $\sum_j z_j \derpar{}{z_j}$ and $\bar{z}$ to $\sum_j \bar{z}_j \derpar{}{\bar{z}_j}$ when we consider $z$ and $\bar{z}$ as vector fields.

Set
\begin{equation}
\begin{aligned}
&b_j = -2 \nabla_{0,\derpar{}{z_j}}, 
&  b_j^+=2 \nabla_{0,\derpar{}{\bar{z}_j}},  \,\, \quad\qquad\qquad\\
&b= (b_1,\dots,b_n), 
&\quad  \LL = - \sum_i (\nabla_{0,e_i})^2  - 2\pi n.
\end{aligned}
\end{equation}

By definition, $\nabla_0 =\nabla + \frac{1}{2} R^L_{x_0}( Z, \cdot)$ so we get
\begin{equation}
\label{defbi}
b_i = -2 \derpar{}{z_i}+\pi \bar{z}_i, \quad b_i^+ = 2 \derpar{}{\bar{z}_i}+\pi z_i,
\end{equation}
and for any polynomial $g(z,\bar{z})$ in $z$ and $\bar{z}$,
\begin{equation}
\label{commutation}
\begin{matrix}
&[b_i,b_j^+]= -4\pi \delta_{ij},
&[b_i,b_j]=[b_i^+,b_j^+]=0, \\
&[g(z,\bar{z}), b_j]=2 \derpar{}{z_j}g(z,\bar{z}), 
&\quad[g(z,\bar{z}), b_j^+]=-2 \derpar{}{\bar{z}_j}g(z,\bar{z}).
\end{matrix}
\end{equation}
Finally, a simple calculation shows:
\begin{equation}
\label{LetL0}
\LL= \sum_i b_i b_i^+ \text{ and } \LL_0 = \LL+ 4\pi \num.
\end{equation}
Recall that we denoted by $||\cdot||_{L^2}$ the $L^2$-norm  associated with $h^{\E_{x_0}}$ and $dv_{TX}$. As for this norm $b_i^+ = (b_i)^*$, we see that $\LL$ and $\LL_0$ are self-adjoint with respect to this norm.

The next theorem is proved in \cite[Theorem 4.1.20]{ma-marinescu}:
\begin{thm}
\label{elementspropresdeL}
The spectrum of the restriction of $\LL$ to $L^2(\R^{2n})$ is $\Sp (\LL \restreint{L^2(\R^{2n})}) =  4\pi \N$ and an orthogonal basis of the eigenspace for the eigenvalue $4\pi k$ is
\begin{equation}
\label{vep}
b^\alpha \left( z^\beta \exp \left( -\frac{\pi}{2} |z|^2\right) \right) \text{, with } \alpha,\beta \in \N^n\text{ and } \sum_i \alpha_i=k.
\end{equation}
\end{thm}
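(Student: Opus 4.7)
The plan is to treat $\LL=\sum_i b_ib_i^+$ as $4\pi$ times a multidimensional number operator, analogous to the harmonic oscillator, and to exploit the Heisenberg-type algebra satisfied by the pairs $(b_i,b_i^+)$. From \eqref{commutation} and the adjointness $b_i^+=(b_i)^*$, the rescaled operators $a_i:=b_i^+/\sqrt{4\pi}$ satisfy canonical bosonic commutation relations $[a_i,a_j^*]=\delta_{ij}$, and $\LL=4\pi\sum_i a_i^*a_i$. A short computation using \eqref{commutation} then yields the ladder identities $[\LL,b^\alpha]=4\pi|\alpha|\,b^\alpha$ and $[b_i^+,b^\alpha]=4\pi\alpha_i\,b^{\alpha-e_i}$, which drive the whole argument.

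Next I would identify the kernel. Since $\langle\LL\psi,\psi\rangle_{L^2}=\sum_i\|b_i^+\psi\|_{L^2}^2\geq 0$, the kernel equals $\bigcap_i\ker b_i^+$. By \eqref{defbi}, the system $b_i^+\psi=0$ reads $2\partial_{\bar z_i}\psi+\pi z_i\psi=0$, equivalently $\partial_{\bar z_i}(\psi\,e^{\pi|z|^2/2})=0$, whose $L^2$ solutions are exactly the functions $f(z)e^{-\pi|z|^2/2}$ with $f$ entire and the product in $L^2$, i.e.\ the Bargmann--Fock space. In this space the monomials $\{z^\beta\}_{\beta\in\N^n}$ form an orthogonal basis, since the Gaussian integral computation gives $\int_{\C^n}z^\beta\bar z^{\beta'}e^{-\pi|z|^2}dv=\pi^{-n-|\beta|}\beta!\,\delta_{\beta\beta'}$.

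Applying $b^\alpha$ to these ground-state vectors then produces, via the ladder identity, eigenvectors of $\LL$ for the eigenvalue $4\pi|\alpha|$, which is precisely the family \eqref{vep} with $|\alpha|=k$. Their orthogonality within a given eigenspace reduces to the Fock-space identity
\begin{equation*}
\langle b^\alpha\psi,b^{\alpha'}\psi'\rangle_{L^2}=\langle\psi,(b^+)^\alpha b^{\alpha'}\psi'\rangle_{L^2}=(4\pi)^{|\alpha|}\alpha!\,\delta_{\alpha\alpha'}\langle\psi,\psi'\rangle_{L^2}
\end{equation*}
for $\psi,\psi'\in\ker\LL$ and $|\alpha|=|\alpha'|$: iterated application of $b_i^+b_j=b_jb_i^++4\pi\delta_{ij}$ shows that $(b^+)^\alpha b^{\alpha'}\psi'$ vanishes unless $\alpha\leq\alpha'$ componentwise, so for $|\alpha|=|\alpha'|$ only $\alpha=\alpha'$ survives, contributing the factor $(4\pi)^{|\alpha|}\alpha!$. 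Orthogonality across distinct eigenvalues is automatic from the self-adjointness of $\LL$ with respect to $\|\cdot\|_{L^2}$.

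The main expected obstacle is \emph{completeness}: showing that the eigenvectors \eqref{vep} actually span $L^2(\R^{2n})$, so that the spectrum is exactly $4\pi\N$ rather than merely contained in it. After the rescaling $z\mapsto z/\sqrt{\pi}$, the vectors \eqref{vep} become, up to normalization, the standard multidimensional Hermite functions in their complex form, whose completeness in $L^2(\R^{2n})$ is classical; equivalently, the Bargmann transform gives a unitary isomorphism $L^2(\R^{2n})\simeq\mathrm{Fock}(\C^n)$ intertwining $\LL$ with $4\pi\sum_i a_i^*a_i$, whose spectrum $4\pi\N$ is well known. Since the authors record this statement as \cite[Theorem 4.1.20]{ma-marinescu}, I would simply invoke that reference to close this final step.
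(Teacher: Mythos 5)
Your proposal is correct, and there is nothing in the paper to compare it against: the paper gives no proof of this statement, simply quoting it from \cite[Theorem 4.1.20]{ma-marinescu}, and your ladder-operator argument (kernel $=$ Bargmann--Fock space, $b_i$ as creation operators, completeness via Hermite functions) is precisely the standard proof of that cited result. The only slip is the normalization constant $\pi^{-n-|\beta|}\beta!$ for $\int_{\C^n}z^\beta\bar z^{\beta'}e^{-\pi|z|^2}dv$, which should be $\pi^{-|\beta|}\beta!\,\delta_{\beta\beta'}$ (consistent with the orthonormal basis $(\pi^{|\beta|}/\beta!)^{1/2}z^\beta e^{-\pi|z|^2/2}$ recorded in the paper); this is immaterial to the orthogonality claim.
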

 
 Especially, an orthonormal basis of $\ker(\LL\restreint{L^2(\R^{2n})})$ is 
\begin{equation}
\left( \frac{\pi^{|\beta|}}{\beta!} \right)^{1/2} z^\beta \exp \left( -\frac{\pi}{2} |z|^2\right),
\end{equation}
and thus if $\PP(Z,Z')$ is the smooth kernel of $\PP$ the orthogonal projection from $(L^2(\R^{2n}), ||\cdot||_0)$ onto $\ker(\LL)$ (where $||\cdot||_0$ is the $L^2$-norm associated to $g^{TX}_{x_0}$) with respect to $dv_{TX}(Z')$, we have
\begin{equation}
\label{formuleP}
\PP(Z,Z') = \exp \left( -\frac{\pi}{2} (|z|^2+|z'|^2-2z\cdot\bar{z}')\right).
\end{equation}

Now let $P^N$ be the orthogonal projection from $(L^2(\R^{2n}, \E_{x_0}), ||\cdot||_{L^2})$ onto $N:=\ker(\LL_0)$, and $P^N(Z,Z')$ be its smooth kernel with respect to $dv_{TX}(Z')$. From \eqref{LetL0}, we have:
\begin{equation}
\label{PNetP}
P^N(Z,Z') = \PP(Z,Z') I_0.
\end{equation}


\section{The first coefficient in the asymptotic expansion}
\label{firstcoeff}

In this section we prove Theorem \ref{thm1}. We will proceed as follows.

 In Section \ref{aformulaforbr}, following \cite[Section 4.1.7]{ma-marinescu}, we will give a formula for $\boldsymbol{b}_r$ involving the~$\0_k$'s and $\LL_0$.
 
 In Section \ref{preuvethm1}, we will see how this formula entails Theorem \ref{thm1}.
 
\subsection{A formula for $\boldsymbol{b}_r$}
\label{aformulaforbr}

By Theorem \ref{elementspropresdeL} and \eqref{LetL0}, we know that for every $\lambda \in \delta$ the unit circle in $\C$, $(\lambda - \LL_0)^{-1}$ exists.

Let $f(\lambda,t)$ be a formal power series on $t$ with values in $\End( L^2(\R^{2n}, \E_{x_0}))$:
\begin{equation}
f(\lambda,t) = \sum_{r=0}^{+\infty} t^r f_r(\lambda) \text{ with } f_r(\lambda) \in\End( L^2(\R^{2n}, \E_{x_0})).
\end{equation}

Consider the equation of formal power series on $t$ for $\lambda \in \delta$:
\begin{equation}
\left( \lambda - \LL_0 - \sum_{r=1}^{+\infty} t^r \0_r \right) f(\lambda,t) = \mathrm{Id}_{L^2(\R^{2n}, \E_{x_0})}.
\end{equation}
We then find that
\begin{equation}
\begin{aligned}
&f_0(\lambda) = (\lambda - \LL_0)^{-1}, \\
&f_r(\lambda) = (\lambda - \LL_0)^{-1} \sum_{j=1}^r \0_j f_{r-j}(\lambda).
\end{aligned}
\end{equation}
Thus by \eqref{annulation01} and by induction,
 \begin{equation}
\label{formulefr}
f_r(\lambda) = \left(\sum\limits_{\substack{r_1 +\cdots +r_k=r \\ r_j \geq 2}}  (\lambda - \LL_0)^{-1} \0_{r_1}\dots (\lambda - \LL_0)^{-1} \0_{r_k} \right)(\lambda - \LL_0)^{-1}.
\end{equation}

\begin{defn}
Following \cite[(4.1.91)]{ma-marinescu}, we define $\FF_r$ by
\begin{equation}
\label{defnFreq}
\FF_r = \frac{1}{2\pi \ic} \int_\delta f_r(\lambda) d\lambda,
\end{equation}
and we denote by $\FF_r(Z,Z')$ its smooth kernel with respect to $dv_{TX}(Z')$.
\end{defn}

\begin{thm}
\label{formulebrthm}
The following equation holds:
\begin{equation}
\label{formulebr}
\boldsymbol{b}_r(x_0) = \FF_{2r}(0,0).
\end{equation}
\end{thm}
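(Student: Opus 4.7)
The plan is to express $p^{-n} P_p(x_0,x_0)$ as a $t$-expansion (with $t=1/\sqrt{p}$) via a contour integral of the resolvent of $\LL_t$, and then match this expansion against the one provided by Theorem~\ref{DLM2006}. The essential input is a spectral gap of the form $\Sp(D_p^2) \subset \{0\} \cup [c_0 p - c_1, +\infty)$ for some $c_0, c_1 > 0$ and $p$ large, which is standard in this setting (cf.\ \cite{ma-marinescu}). After rescaling it becomes $\Sp(\LL_t) \subset \{0\} \cup [c_0/2, +\infty)$, so one can fix a small contour $\delta \subset \C$ around $0$, lying in the resolvent set of $\LL_t$ and enclosing only the eigenvalue $0$, for all $t$ small.

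Setting $V_t := S_t^{-1} \kappa^{1/2}$, the identity $\LL_t = t^2 V_t D_p^2 V_t^{-1}$ combined with the substitution $\mu = \lambda/t^2$ on contours gives
\[
P_t := V_t P_p V_t^{-1} = \frac{1}{2\pi \ic} \oint_\delta (\lambda - \LL_t)^{-1} d\lambda,
\]
whose smooth kernel satisfies $P_t(Z, Z') = t^{2n} \kappa^{1/2}(tZ) P_p(tZ, tZ') \kappa^{1/2}(tZ')$ and in particular $P_t(0,0) = p^{-n} P_p(x_0, x_0)$. Iterating the resolvent identity with $\LL_t = \LL_0 + \sum_{r\geq 2} t^r \0_r$ (using $\0_1 = 0$ from Proposition~\ref{expressionprecisedeO2}) produces precisely the $f_r(\lambda)$ of \eqref{formulefr}; integrating term by term over $\delta$ yields, at the operator level, $P_t = \sum_{r=0}^m t^r \FF_r + O(t^{m+1})$.

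The main technical obstacle is upgrading this operator-norm expansion into a pointwise kernel expansion at the origin, i.e.\ establishing $P_t(0,0) = \sum_{r=0}^m t^r \FF_r(0,0) + O(t^{m+1})$. This requires uniform-in-$t$ Sobolev estimates for $\LL_t$ and Gaussian off-diagonal decay of the resolvent kernels in suitable weighted norms. This analysis is carried out in \cite[Sections 4.1.5--4.1.7]{ma-marinescu} for the holomorphic case; in our setting $\LL_t$ differs only by the bounded degree-changing terms $\Phi_E^{\pm 2}$ of \eqref{decompodePhi} (cf.\ also \cite[Section 8.1.1]{ma-marinescu}), so those estimates transfer without substantive modification.

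Once the kernel asymptotic is established, comparing with Theorem~\ref{DLM2006},
\[
\sum_{r=0}^m t^r \FF_r(0,0) + O(t^{m+1}) = P_t(0,0) = p^{-n} P_p(x_0,x_0) = \sum_{r=0}^k t^{2r} \boldsymbol{b}_r(x_0) + O(t^{2k+2}),
\]
and invoking uniqueness of asymptotic expansions in $t$ immediately yields $\FF_{2r}(0,0) = \boldsymbol{b}_r(x_0)$ (together with the vanishing $\FF_{2r+1}(0,0) = 0$), which is \eqref{formulebr}.
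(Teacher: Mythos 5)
Your proposal is correct and follows essentially the same route as the paper, which simply cites \cite[Theorem 8.1.4]{ma-marinescu} (the analogue of \cite[Theorem 4.1.24]{ma-marinescu} in the non-concentrated, higher-degree setting) and matches the resulting $t$-expansion of the rescaled kernel against Theorem \ref{DLM2006}. You have merely unpacked the resolvent-contour construction and the reduction to the uniform estimates of \cite[Sections 4.1.5--4.1.7]{ma-marinescu}, which is exactly the content the paper defers to that reference.
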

\begin{proof}
This formula follows from \cite[Theorem 8.1.4]{ma-marinescu}, as \cite[(4.1.97)]{ma-marinescu} follows from \cite[Theorem 4.1.24]{ma-marinescu}, remembering that in our situation, the Bergman kernel $P_p$ is not supported  in degree~0.
 \end{proof}

\subsection{Proof of Theorem \ref{thm1}}
\label{preuvethm1}

Let $T_{\mathbf{r}} (\lambda)=  (\lambda - \LL_0)^{-1} \0_{r_1}\dots (\lambda - \LL_0)^{-1} \0_{r_k} (\lambda - \LL_0)^{-1}$ be the term in the sum~\eqref{formulefr} corresponding to $\mathbf{r}=(r_1,\dots,r_k)$. Let $N^\perp$ be the orthogonal of $N$ in $L^2(\R^{2n}, \E_{x_0})$, and $P^{N^\perp}$ be the associated orthogonal projector. In $T_{\mathbf{r}} (\lambda)$, each term $(\lambda - \LL_0)^{-1}$ can be decomposed as
\begin{equation}
\label{decompodelaresolvante}
(\lambda - \LL_0)^{-1} = (\lambda - \LL_0)^{-1} P^{N^\perp} + \frac{1}{\lambda} P^N.
\end{equation}
Set 
\begin{equation}
 L^{N^\perp}(\lambda)=(\lambda - \LL_0)^{-1} P^{N^\perp}, \qquad L^N(\lambda)=\frac{1}{\lambda} P^N.
 \end{equation}
By \eqref{LetL0}, $\LL_0$ preserves the degree, and thus so do $(\lambda - \LL_0)^{-1}$, $L^{N^\perp}$ and $L^N$.
 
 For $\eta=(\eta_1,\dots,\eta_{k+1})\in\{N,N^\perp \}^{k+1}$, let
\begin{equation}
T_{\mathbf{r}}^\eta(\lambda) = L^{\eta_1}(\lambda) \0_{r_1} \dots L^{\eta_k}(\lambda) \0_{r_k}L^{\eta_{k+1}}(\lambda).
\end{equation}
We can decompose:
\begin{equation}
T_{\mathbf{r}} (\lambda) = \sum_{\eta=(\eta_1,\dots,\eta_{k+1})} T_{\mathbf{r}}^\eta(\lambda),
\end{equation}
and by \eqref{formulefr} and \eqref{defnFreq}
\begin{equation}
\label{expressionFr}
\FF_{2r} = \frac{1}{2\pi \ic} \sum\limits_{\substack{ r_1 +\cdots +r_k=2r \\ (\eta_1,\dots,\eta_{k+1})}}  \int_\delta T_{\mathbf{r}}^\eta (\lambda) d\lambda.
\end{equation}

Note that $  L^{N^\perp}(\lambda)$ is an holomorphic function of $\lambda$, so
\begin{equation}
\int_\delta L^{N^\perp}(\lambda) \0_{r_1} \dots L^{N^\perp}(\lambda) \0_{r_k}L^{N^\perp}(\lambda) d\lambda =0.
\end{equation}
Thus, in \eqref{expressionFr}, every non-zero term that appears contains at least one $L^N (\lambda)$:
\begin{equation}
\label{ilyaduPn}
\int_\delta T_{\mathbf{r}}^\eta (\lambda) d\lambda \neq 0 \Rightarrow \text{there exists } i_0 \text{ such that }\eta_{i_0} = N.
\end{equation}

Now fix $k$ and $j$ in $\N$. Let $s\in L^2(\R^{2n}, \E_{x_0})$ be a form of degree $2j$, $\mathbf{r} \in (\N\setminus \{0,1\})^k$ such that $\sum_i r_i =2r$ and $\eta=(\eta_1,\dots,\eta_{k+1})\in\{N,N^\perp \}^{k+1}$ such that there is a $i_0$ satisfying $\eta_{i_0} = N$. We want to find a necessary condition for $I_{2j} T_{\mathbf{r}}^\eta (\lambda) I_{2j}s$ to be non-zero.

Suppose then that  $I_{2j} T_{\mathbf{r}}^\eta (\lambda) I_{2j}s\neq 0$. Since $L^{\eta_{i_0}} =  \frac{1}{\lambda} P^N$, and $N$ is concentrated in degree 0, we must have
$$ \deg \left( \0_{r_{i_0}} L^{\eta_{i_0+1}}(\lambda) \0_{r_{i_0+1}} \dots L^{\eta_k}(\lambda) \0_{r_k}L^{\eta_{k+1}}(\lambda)I_{2j}s \right) =0,$$
but each $L^{\eta_i}(\lambda)$ preserves the degree, and by Theorem \ref{dldeLt} each $\0_{r_i}$ lowers the degree at most by 2, so
$$0= \deg \left( \0_{r_{i_0}} L^{\eta_{i_0+1}}(\lambda) \0_{r_{i_0+1}} \dots L^{\eta_k}(\lambda) \0_{r_k}L^{\eta_{k+1}}(\lambda)I_{2j}s \right) \geq 2j - 2(k-i_0+1),$$
and thus
\begin{equation}
\label{majoration1}
2j \leq 2(k-i_0+1).
\end{equation}

Similarly, $L^{\eta_1}(\lambda) \0_{r_1} \dots L^{\eta_k}(\lambda) \0_{r_k}L^{\eta_{k+1}}(\lambda)I_{2j}s$ must have a non-zero component in degree $2j$ and by Theorem~\ref{dldeLt} each $\0_{r_i}$ rises the degree at most by 2, so $2j$ must be less or equal to the number of $\0_{r_i}$'s appearing before $\0_{r_{i_0}}$, that is
 \begin{equation}
 \label{majoration2}
 2j \leq 2(i_0 -1).
 \end{equation}
With \eqref{majoration1} and \eqref{majoration2}, we find
\begin{equation}
\label{majoration3}
 4j \leq 2k.
 \end{equation}
Finally, since for every $i$, $r_i \geq 2$ and $\sum_{i=1}^k r_i = 2r$, we have $2k \leq 2r$, and thus
\begin{equation}
\label{majoration4}
 4j \leq 2k \leq 2r.
 \end{equation}

Consequently, if $r<2j$ we have $I_{2j} T_{\mathbf{r}}^\eta (\lambda) I_{2j}=0$, and by \eqref{expressionFr}, we find $I_{2j} \FF_{2r} I_{2j}=0$. Using Theorem \ref{formulebrthm}, we find 
$$I_{2j} \boldsymbol{b}_r I_{2j}=0,$$
 which, combined with Theorem \ref{DLM2006}, entails the first part of Theorem \ref{thm1}.

For the second part of this theorem, let us assume that we are in the limiting case where $r=2j$. We also suppose that $j\geq1$, because in the case $j=0$, \cite[(8.1.5)]{ma-marinescu} implies that $\boldsymbol{b}_0(x_0)=\FF_0(0,0)=I_0\PP(0,0)= I_0$, so Theorem \ref{thm1} is true for $j=0$.

 In $I_{2j} \FF_{4j} I_{2j}$, there is only one term satisfying equations \eqref{majoration1}, \eqref{majoration2} and \eqref{majoration4}: first we see that  \eqref{majoration4} imply that $r=k=2j$ and for all $i$, $r_i=2$, while \eqref{majoration1} and \eqref{majoration2} imply that the $i_0 $ such that $\eta_{i_0} = N$ is unique and equal to $j$. Moreover, since the degree must decrease by $2j$ and then increase by $2j$ with only $k=2j$ $\0_{r_i}$'s available, only $\0_2^{+2}$ and $\0_2^{-2}$ appear in $I_{2j} \FF_{4j} I_{2j}$, and not $\0_2^0$. To summarize:

\begin{align}
\label{I2jF4jI2j}
I_{2j} \FF_{4j} I_{2j}&=\frac{1}{2\pi \ic} \int_\delta I_{2j} \left((\lambda - \LL_0)^{-1} P^{N^\perp} \0_2^{+2} \right)^j \frac{1}{\lambda} P^N \left(\0_2^{-2}  (\lambda - \LL_0)^{-1} P^{N^\perp}\right)^j I_{2j} d\lambda \notag \\
&=I_{2j} \left( \LL_0{}^{-1}P^{N^\perp} \0_2^{+2}\right)^j P^N \left(\0_2^{-2}\LL_0{}^{-1}P^{N^\perp}\right)^j I_{2j}\notag \\
&=I_{2j}  \left( \LL_0{}^{-1} \0_2^{+2}\right)^j P^N \left(\0_2^{-2}\LL_0{}^{-1} \right)^j  I_{2j},
\end{align}
because by \eqref{LetL0}, $L^2(\R^{2n},(\Lambda^{0,>0} (T^*X)\otimes E)_{x_0}) \subset N^\perp$, so we can remove the $P^{N^\perp}$'s.

Let $A=I_{2j}\left( \LL_0{}^{-1}\0_2^{+2}\right)^j P^N$. Since $(\0_2^{+2})^*=\0_2^{-2}$ (see Proposition \ref{expressionprecisedeO2}) and $\LL_0$ is self-adjoint, the adjoint of $A$ is $A^*=P^N\left( \0_2^{-2} \LL_0{}^{-1}\right)^j  I_{2j}$, and thus 
\begin{equation}
\label{F4jetA}
I_{2j} \FF_{4j} I_{2j} = A A^*.
\end{equation}

Recall that $P^N = \PP I_0$ (see \eqref{PNetP}). Let $s\in L^2(\R^{2n}, E_{x_0})$, since $\LL_0 = \LL +4\pi \num$ and $\LL\PP s=0$, the term $(\PP s)\RR_{x_0}$ is an eigenfunction of $\LL_0$ for the eigenvalue $2\times 4\pi$. Thus, we get
\begin{equation}
\LL_0{}^{-1} \0_2^{+2} P^Ns =\LL_0{}^{-1} \0_2^{+2} \PP s = \LL_0{}^{-1}\left( (\PP s)\RR_{x_0} \right) =\frac{1}{4\pi}\frac{1}{2} \RR_{x_0} \PP s.
\end{equation}
 Now, an easy induction shows that
\begin{equation}
\label{calculA}
A= \frac{1}{(4\pi)^j}\frac{1}{2\times 4 \times\cdots\times 2j} I_{2j} \RR_{x_0}^j \PP =  \frac{1}{(4\pi)^j}\frac{1}{2^j j!} I_{2j} \RR_{x_0}^j\PP.
\end{equation}

Let $A(Z,Z')$ and $A^*(Z,Z')$ be the smooth kernels of $A$ and $A^*$ with respect to $dv_{TX}(Z')$. 
By~\eqref{F4jetA}, $I_{2j} \FF_{4j} I_{2j}(0,0) = \int_{\R^{2n}}A(0,Z) A^*(Z,0)dZ$. Thanks to 
\begin{equation}
\label{P(0,0)}
\int_{\R^{2n}} \PP(0,Z) \PP(Z,0)dZ=(\PP \circ\PP)(0,0)=\PP(0,0)=1
\end{equation}
and \eqref{calculA}, we find \eqref{2epartiethm1}.


\section{The second coefficient in the asymptotic expansion}
\label{2ndcoeff}

In this section, we prove Theorem \ref{thm2}. Using \eqref{formulebr}, we know that 
\begin{equation}
I_{2j} \boldsymbol{b}_{2j+1} I_{2j}(0,0)=I_{2j} \FF_{4j+2} I_{2j}(0,0).
\end{equation}

 In Section \ref{decompodupb}, we decompose this term into three terms, and then in Sections \ref{onlyO2} and \ref{lesdeuxautres} we handle them separately. 
 
 For the rest of the section we fix $j \in \llbracket 0, n \rrbracket$. For every smoothing operator $F$ acting on $L^2(\R^{2n},\E_{x_0})$ that appears in this section, we will denote by $F(Z,Z')$ its smooth kernel with respect to $dv_{TX}(Z')$.

\subsection{Decomposition of the problem}
\label{decompodupb}

Applying inequality  \eqref{majoration4} with $r=2j+1$, we see that in $I_{2j} \FF_{4j+2} I_{2j}$, the non-zero terms $\int_\delta T^\eta_{\mathbf{r}}(\lambda) d\lambda$ appearing in decomposition \eqref{expressionFr} satisfy $k=2j$ or $k=2j+1$. Since $\sum_i r_i =4j+2$ and $r_i \geq 2$, we see that in $I_{2j} \FF_{4j+2} I_{2j}$ there are three types of terms $T^\eta_{\mathrm{r}}(\lambda)$ with non-zero integral, in which:
\begin{itemize}[label=$\bullet$]
\item for $k=2j$: 
\begin{itemize}
\item there are $2j-2$ $\0_{r_i}$'s equal to $\0_2$ and 2 equal to $\0_3$: we will denote by $\mathrm{I}$ the sum of these terms,
 \item there are $2j-1$ $\0_{r_i}$'s equal to $\0_2$ and 1 equal to $\0_4$: we will denote by $\mathrm{II}$ the sum of these terms,
 \end{itemize}
 \item for $k=2j+1$: 
 \begin{itemize}
 \item all the $\0_{r_i}$'s are equal to $\0_2$: we will denote by $\mathrm{III}$ the sum of these terms.      
 \end{itemize}
\end{itemize}
We thus have a decomposition 
\begin{equation}
\label{decompo2eterme}
I_{2j} \FF_{4j+2} I_{2j} =  \mathrm{I}+\mathrm{II}+\mathrm{III}.
\end{equation}

\begin{rem}
\label{remO3O4}
Note that for the two sums I and II to be non-zero, we must have $j \geq 1$. Moreover, in the two first cases, as $k=2j$, by the same reasoning as in Section \ref{preuvethm1}, \eqref{majoration1} and \eqref{majoration2} imply that the $i_0 $ such that $\eta_{i_0} = N$ is unique and equal to $j$, and that only $\0_2^{\pm2}$, $\0_3^{\pm2}$ and $\0_4^{\pm2}$ appear in I and II, and not some $\0_{r_i}^0$.
\end{rem}

\subsection{The term involving only $\0_2$}
\label{onlyO2}

\begin{lemme}
In any term $T^\eta_{\mathrm{r}}(\lambda)$ appearing in the term $\mathrm{III}$ (with non-vanishing integral), the  $i_0 $ such that $\eta_{i_0} = N$ is unique and equal to $j$ or $j+1$. If we denote by $\mathrm{III}_a$ and $\mathrm{III}_b$ the sum of the terms corresponding to these two cases, we have:
\begin{equation}
\label{IIIaetIIIb}
\begin{aligned}
&\mathrm{III}_a = \sum_{k=0}^j I_{2j} (\LL_0{}^{-1} \0_2^{+2})^{j-k}(\LL_0{}^{-1} \0_2^{0})(\LL_0{}^{-1} \0_2^{+2})^{k} P^N( \0_2^{-2}\LL_0{}^{-1})^j  I_{2j},\\
&\mathrm{III}_b =(\mathrm{III}_a)^* ,\\
&\mathrm{III}=\mathrm{III}_a+\mathrm{III}_b.
\end{aligned}
\end{equation}
\end{lemme}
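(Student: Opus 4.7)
The plan is to parallel the analysis carried out in Section \ref{preuvethm1}, with $k=2j+1$ and every $\0_{r_i} = \0_2 = \0_2^0 + \0_2^{+2} + \0_2^{-2}$. I would first rerun the degree-counting leading to \eqref{majoration1} and \eqref{majoration2}: combined with the observation \eqref{ilyaduPn} that at least one $\eta_i$ must equal $N$, the inequalities $i_0 - 1 \geq j$ and $k - i_0 + 1 \geq j$ force $i_0$ to take exactly one of two consecutive integer values (in particular the index is unique). These two values correspond to the two sub-sums $\mathrm{III}_a$ and $\mathrm{III}_b$.

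With $i_0$ fixed, the next step is to identify which gradings of each $\0_2$ can give a non-zero contribution. For the sub-case corresponding to $\mathrm{III}_a$, the $j$ operators placed to the right of $P^N$ in writing order (i.e.\ applied before $P^N$) must lower the degree by exactly $2j$; since each of $\0_2^0, \0_2^{+2}, \0_2^{-2}$ lowers the degree by at most $2$, they are all forced to equal $\0_2^{-2}$. Symmetrically, the $j+1$ operators placed to the left of $P^N$ must raise the degree by $2j$, leaving room for exactly one $\0_2^0$ and $j$ copies of $\0_2^{+2}$, with the $\0_2^0$ free to occupy any of the $j+1$ admissible slots. Performing the residue integration at $\lambda = 0$ (which removes the single factor $\frac{1}{\lambda}$ coming from $L^N$ and converts every $(\lambda - \LL_0)^{-1}P^{N^\perp}$ into $\LL_0^{-1}P^{N^\perp}$, up to overall signs) and summing over the admissible positions of the lone $\0_2^0$ yields the announced formula for $\mathrm{III}_a$.

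The other sub-case is handled symmetrically: the $j$ operators to the left of $P^N$ are all forced to be $\0_2^{+2}$, and a single $\0_2^0$ appears among the $j+1$ operators to the right. Rather than writing the resulting sum out explicitly, I would identify it directly as the adjoint $(\mathrm{III}_a)^*$. Indeed, Proposition \ref{expressionprecisedeO2} gives $(\0_2^{+2})^* = \0_2^{-2}$, and combined with the formal self-adjointness of $\0_2$ from Theorem \ref{dldeLt} and the decomposition \eqref{decompodeOr}, this forces $\0_2^0$ to be self-adjoint as well; together with the self-adjointness of $I_{2j}$, $P^N$ and $\LL_0^{-1}$, the equality $\mathrm{III}_b = (\mathrm{III}_a)^*$ then follows termwise after relabelling the summation index. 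Finally, $\mathrm{III} = \mathrm{III}_a + \mathrm{III}_b$ is just the exhaustive enumeration of the two admissible values of $i_0$.

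The main obstacle will be purely a matter of bookkeeping: matching the summation variable $k$ of the stated formula to the position of the lone $\0_2^0$, and correctly tracking the signs produced by evaluating $(\lambda - \LL_0)^{-1}|_{N^\perp}$ at $\lambda = 0$ through the residue computation. Both are mechanical once the structural constraints on $i_0$ and on the gradings are in place.
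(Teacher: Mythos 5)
Your degree count correctly restricts the admissible positions of an index $i$ with $\eta_i=N$ to two consecutive slots, correctly identifies the forced gradings of the $\0_2$'s on either side of $P^N$, and your adjointness argument for $\mathrm{III}_b=(\mathrm{III}_a)^*$ is the same as the paper's. However, there is a genuine gap in the uniqueness claim. Knowing that every index $i$ with $\eta_i=N$ must lie in a two-element set does \emph{not} imply that only one such index occurs: both admissible slots could simultaneously carry an $L^N(\lambda)$. Concretely, the term
\begin{equation*}
\big(\LL_0{}^{-1} \0_2^{+2}\big)^{j}\, P^N\, \0_2^{0}\, P^N\, \big(\0_2^{-2}\LL_0{}^{-1}\big)^{j}
\end{equation*}
is compatible with every one of your degree constraints (the middle $\0_2$ must preserve the degree, hence be $\0_2^0$; the constraint $P^N\0_2^{\pm2}P^N=0$ only eliminates the other two gradings), and its contour integral does not vanish for formal reasons: the two factors of $\tfrac{1}{\lambda}P^N$ produce a double pole, whose residue involves a derivative of the holomorphic part and is generically non-zero.

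The paper therefore has to kill this term by a separate, non-formal argument: using the explicit formula \eqref{O20} for $\0_2^0$ together with \cite[(3.13), (3.16b), (4.1a)]{MR2876259}, it shows that $\PP\,\0_2^0\,\PP=0$, whence $P^N\0_2^0P^N=\PP\,\0_2^0\,\PP\, I_0=0$. This identity is a substantive input (it is the same mechanism responsible for the vanishing of the odd coefficients in the classical expansion) and cannot be replaced by bookkeeping. Without it, your argument does not establish the uniqueness of $i_0$, and the decomposition $\mathrm{III}=\mathrm{III}_a+\mathrm{III}_b$ would a priori be missing a third, doubly projected term.
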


\begin{rem}
\label{L0-1estdef}
For the same reason as for~\eqref{I2jF4jI2j}, we have removed the $P^{N^\perp}$'s in \eqref{IIIaetIIIb} without getting any problem with the existence of $\LL_0{}^{-1}$.
\end{rem}

\begin{proof}
Fix a term $T^\eta_{\mathrm{r}}(\lambda)$ appearing in the term $\mathrm{III}$ with non-vanishing integral. Using again the same reasoning as in Section \ref{preuvethm1}, we see that there exists at most two indices $i_0 $ such that $\eta_{i_0} = N$, and that they are in $\{j,j+1\}$. Indeed, with only $2j+1$ $\0_{r_i}$'s at our disposal,  we need $j$ of them before the first $P^N$, and $j$ after the last one.

Now, the only possible term with $\eta_j = \eta_{j+1}=N$ is:
$$(\LL_0{}^{-1} \0_2^{+2})^j P^N\0_2^0 P^N( \0_2^{-2}\LL_0{}^{-1})^j.$$
To prove that this term is vanishing, we will use \cite{MR2876259}. By \eqref{O20}, \cite[(3.13),(3.16b)]{MR2876259} and \cite[(4.1a)]{MR2876259} we see that $\PP\0_2^0 \PP =0$, and so
\begin{equation}
\label{PNO20PN=0}
P^N\0_2^0 P^N = \PP \0_2^0 \PP I_0 = 0,
\end{equation}
we have proved the first part of the lemma.

The second part follows from the reasoning made at the beginning of this proof, and the facts that $i_0$ is unique, $\0_2^0$ is self-adjoint and $(\LL_0{}^{-1} \0_2^{+2})^*=\0_2^{-2}\LL_0{}^{-1}$.
 \end{proof}

Let us compute the term that appears in \eqref{IIIaetIIIb}:
\begin{equation}
\label{defIIIak}
 \mathrm{III}_{a,k} := I_{2j} (\LL_0{}^{-1} \0_2^{+2})^{j-k}(\LL_0{}^{-1} \0_2^{0})(\LL_0{}^{-1} \0_2^{+2})^{k} P^N( \0_2^{-2}\LL_0{}^{-1})^j  I_{2j}.
 \end{equation}

With \eqref{calculA}, we know that
\begin{equation}
\label{commavant}
P^N( \0_2^{-2}\LL_0{}^{-1})^j  I_{2j} = \frac{1}{(4\pi)^j}\frac{1}{2^j j!} \PP \left(\RR_{x_0}^j\right) ^* I_{2j},
\end{equation}
and
\begin{align}
I_{2j} (\LL_0{}^{-1} \0_2^{+2})^{j-k}&(\LL_0{}^{-1} \0_2^{0})(\LL_0{}^{-1} \0_2^{+2})^{k} P^N \notag \\
&=\frac{1}{(4\pi)^k}\frac{1}{2^k k!} I_{2j} (\LL_0{}^{-1} \0_2^{+2})^{j-k}\LL_0{}^{-1}  (\0_2^{0}\RR_{x_0}^k\PP) I_0.
\label{avec020}
\end{align}
Let 
\begin{equation}
\begin{aligned}
& R_{k\bar{m}\ell \bar{q}} = \left \langle R^{TX}\left( \derpar{}{z_k} , \derpar{}{\bar{z}_m} \right) \derpar{}{z_\ell}, \derpar{}{\bar{z}_q} \right \rangle_{x_0}, \\
& R^E_{k\bar{\ell}} = R^E_{x_0}\left( \derpar{}{z_k} , \derpar{}{\bar{z}_\ell} \right).
\end{aligned}
\end{equation}

By \cite[Lemma 3.1]{MR2876259}, we know that 
\begin{equation}
\label{symetriesR}
R_{k\bar{m}\ell \bar{q}}=R_{\ell\bar{m}k \bar{q}}=R_{k\bar{q}\ell \bar{m}}=R_{\ell\bar{q}k \bar{m}}\text{ and }r^X_{x_0}=8R_{m\bar{m}q\bar{q}}.
\end{equation}

Once again, our $\0_2^0$ correspond to the $\0_2$ of \cite{MR2876259} (see \eqref{O20} and \cite[(3.13),(3.16b)]{MR2876259}), so we can use \cite[(4.6)]{MR2876259} to get:
\begin{equation}
\label{O20P}
\0_2^0\RR_{x_0}^k \PP = \left( \frac{1}{6}b_mb_qR_{k\bar{m}\ell \bar{q}}z_kz_\ell + \frac{4}{3}b_q R_{\ell \bar{k}k \bar{q}}z_\ell - \frac{\pi}{3}b_qR_{k\bar{m}\ell \bar{q}}z_k z_\ell \bar{z}'_m +b_q R^E_{\ell \bar{q}} z_\ell \right)\RR_{x_0}^k \PP,
\end{equation}

Set
\begin{equation}
\begin{aligned}
&a=\frac{1}{6}b_mb_qR_{k\bar{m}\ell \bar{q}}z_kz_\ell, & \qquad b=\frac{4}{3}b_q R_{\ell \bar{k}k \bar{q}}z_\ell, \\
&c= - \frac{\pi}{3}b_qR_{k\bar{m}\ell \bar{q}}z_k z_\ell \bar{z}'_m, &Ê d=b_q R^E_{\ell \bar{q}} z_\ell. \,\,\,\,\quad
\end{aligned}
\end{equation}
Thanks to \eqref{LetL0}, \eqref{vep} and \eqref{O20P}, we find
\begin{equation}
\label{L0-1RkO20P}
\LL_0{}^{-1}  \0_2^{0}\RR_{x_0}^{ k}\PP I_0 = \left( \frac{a}{4\pi(2+2k)}+\frac{b+c+d}{4\pi(1+2k)} \right)\RR_{x_0}^{ k}\PP I_0,
\end{equation}
and by induction, \eqref{avec020} becomes

\begin{align}
I_{2j} (\LL_0{}^{-1} \0_2^{+2})^{j-k}&(\LL_0{}^{-1} \0_2^{0})(\LL_0{}^{-1} \0_2^{+2})^{k} P^N \\
&= \frac{1}{(4\pi)^{j+1}}\frac{1}{2^k k!} I_{2j}\RR_{x_0}^{ j-k}\left( \frac{a}{(2+2k)\cdots(2+2j)}+\frac{b+c+d}{(1+2k)\cdots(1+2j)} \right)\RR_{x_0}^{ k}\PP I_0. \notag
\end{align}

\begin{lemme}
We have:
\begin{equation}
\begin{aligned}
&(a\RR_{x_0}^{ k}\PP)(0,Z) = \frac{1}{6} r^X_{x_0} \RR_{x_0}^{ k}\PP(0,Z),
&\quad (b\RR_{x_0}^{ k}\PP)(0,Z)= -\frac{1}{3} r^X_{x_0} \RR_{x_0}^{ k}\PP(0,Z), \\
&(c\RR_{x_0}^{ k}\PP)(0,Z)=0,
&\quad (d\RR_{x_0}^{ k}\PP)(0,Z) = -2 R^E_{q\bar{q}}\RR_{x_0}^{ k}\PP(0,Z).
\end{aligned}
\end{equation}
\end{lemme}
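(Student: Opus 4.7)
The proof is a direct commutator computation. The essential observation is that evaluation at $Z=0$ annihilates every term retaining an explicit factor $z_k$ or $z_\ell$, since $z_k|_{Z=0}=0$. The strategy for each of the four pieces is therefore the same: move the $b_q$'s past the $z$'s using the commutation relation $[b_q, z_\ell]=-2\delta_{q\ell}$ from \eqref{commutation}, and keep only the terms in which all the $z$-factors have been converted into Kronecker $\delta$'s by the commutators. Since $b_m$ and $b_q$ mutually commute, ordering issues among the $b$'s are irrelevant.

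For $d\PP$ and $b\PP$, a single $b_q$ acts on a single $z_\ell$, producing just two terms; the one with a residual $z$-factor vanishes at $Z=0$, leaving the scalar $-2\delta_{q\ell}\PP$. Contracting with the relevant tensors and invoking the K\"{a}hler symmetries in \eqref{symetriesR}, in particular $r^X_{x_0}=8R_{m\bar{m}q\bar{q}}$ and $R_{\ell\bar{k}k\bar{\ell}}=R_{k\bar{k}\ell\bar{\ell}}$, one obtains at once $(d\PP)(0,Z)=-2R^E_{q\bar{q}}\PP(0,Z)$ and $(b\PP)(0,Z)=-\tfrac{1}{3}r^X_{x_0}\PP(0,Z)$. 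For $c\PP$, there is only one $b_q$ available but two explicit $z$-factors, so at most one $z$ can be absorbed by a commutator; the surviving $z$-factor forces $(c\PP)(0,Z)=0$, and note that the explicit $\bar{z}'_m$ plays no role in this cancellation since $\bar{z}'_m$ is a parameter and commutes with every $b_q$.

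The only delicate piece is $a\PP$, where $b_m b_q$ act on $z_k z_\ell \PP$. Applying the commutation relation twice yields seven terms; five of them retain at least one explicit $z_k$ or $z_\ell$ and vanish at $Z=0$. The two remaining terms correspond to the two possible matchings of $\{b_m, b_q\}$ with $\{z_k, z_\ell\}$, and together they contribute $4(\delta_{mk}\delta_{q\ell} + \delta_{m\ell}\delta_{qk})\PP$. Contracting with $R_{k\bar{m}\ell\bar{q}}$ produces $4R_{k\bar{k}\ell\bar{\ell}} + 4R_{k\bar{\ell}\ell\bar{k}}$; by the symmetries \eqref{symetriesR} both curvature contractions equal $r^X_{x_0}/8$, so the sum is $r^X_{x_0}\PP$, and the prefactor $1/6$ yields $(a\PP)(0,Z) = \tfrac{1}{6}r^X_{x_0}\PP(0,Z)$.

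The main obstacle is essentially bookkeeping: one must carry out the seven-term commutator expansion for $a\PP$ correctly and apply the K\"{a}hler symmetries \eqref{symetriesR} without sign errors. Once the vanishing criterion of Step~1 is noted, each of the four assertions reduces to a short algebraic manipulation.
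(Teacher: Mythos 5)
Your proof is correct and follows essentially the same route as the paper: commute the $b_q$'s past the $z$-factors via \eqref{commutation}, discard everything still carrying an explicit $z$ at $Z=0$, and contract the resulting Kronecker deltas using the symmetries \eqref{symetriesR}. The paper only writes out the computation for $b\PP$ and declares the others similar, whereas you carry out all four (including the seven-term expansion for $a\PP$), and your bookkeeping checks out.
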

\begin{proof}
This lemma is a consequence of the relations \eqref{commutation} and \eqref{symetriesR}. For instance, we will compute $(b\RR_{x_0}^{ k}\PP)(0,Z)$, the other terms are similar.
\begin{align*}
(b\RR_{x_0}^{ k}\PP)(0,Z) &= \left( \frac{4}{3}b_q R_{\ell \bar{k}k \bar{q}}z_\ell \RR_{x_0}^{ k}\PP \right)(0,Z) \\
&=  \frac{4}{3} R_{\ell \bar{k}k \bar{q}}\RR_{x_0}^{ k} \left(\left( z_\ell b_q -2 \delta_{\ell q} \right) \PP\right)(0,Z) \\
&= -\frac{8}{3} R_{\ell \bar{k}k\bar{\ell}} \RR_{x_0}^{ k}\PP(0,Z)=-\frac{1}{3} r^X_{x_0} \RR_{x_0}^{ k}\PP(0,Z).
\end{align*}
 \end{proof}

Using \eqref{P(0,0)}, \eqref{defIIIak}, \eqref{commavant} and \eqref{L0-1RkO20P}, we find
 \begin{equation}
 \label{calculavec1O20}
 \mathrm{III}_{a,k}(0,0)\! = \! I_{2j}\CC_j(j)\RR_{x_0}^{ j-k} \! \left[ \frac{1}{6}\left(\CC_{j+1}(j+1)\!-\!\frac{\CC_j(k)}{2\pi(2k+1)}\!\right)r^X_{x_0}\! -\! \frac{\CC_j(k)}{2\pi(2k+1)}R^E_{q\bar{q}} \right] \!\RR_{x_0}^k \left(\RR_{x_0}^{ j}\right) ^*\! \! I_{2j}.
 \end{equation}
Notice that $2R^E_{q\bar{q}}=R^E_{x_0}\left( \sqrt{2}\derpar{}{z_q} , \sqrt{2}\derpar{}{\bar{z}_q} \right)=R^E_{x_0}(w_q,\bw_q)=\ic R^E_{\Lambda,x_0}$ by definition. Consequently, 
 \begin{equation}
 \label{III}
 \begin{aligned}
& \mathrm{III}_a(0,0) = \\
&I_{2j}\CC_j(j)  \! \sum_{k=0}^j \RR_{x_0}^{ j-k}\left[ \frac{1}{6} \! \left( \! \CC_{j+1}(j+1) \! - \! \frac{\CC_j(k)}{2\pi(2k+1)} \! \right) \!r^X_{x_0} \! - \! \frac{\CC_j(k)}{4\pi(2k+1)}\ic R^E_{\Lambda,x_0} \! \right] \!\RR_{x_0}^k \left(\RR_{x_0}^{ j}\right) ^*\!  I_{2j}.
 \end{aligned}
 \end{equation}

\subsection{The two other terms}
\label{lesdeuxautres}

In this subsection, we suppose that $j \geq 1$ (cf. Remark \ref{remO3O4}). Moreover, the existence of any $\LL_0{}^{-1}$ appearing in this section follows from the reasoning done in Remark \ref{L0-1estdef}, and this operator will be used without further precision.

Due to \eqref{dldeLtprecis}, we have
\begin{equation}
\label{O3O4}
\0_3^{+2}=\der{}{t}\left( \Phi^{+2}_{E_0}(tZ) \right)\restreint{t=0}=z_i \derpar{\RR_\cdot}{z_i}(0)+\bar{z}_i \derpar{\RR_\cdot}{\bar{z}_i}(0) \text{ and}
\end{equation}
\begin{equation}
\label{expression04+2}
\0_4^{+2}=\frac{z_iz_j}{2} \derpar{^2 \RR_\cdot}{z_i\partial z_j}(0)+z_i\bar{z}_j\derpar{^2 \RR_\cdot}{z_i\partial \bar{z}_j}(0)+\frac{\bar{z}_i \bar{z}_j }{2}\derpar{^2 \RR_\cdot}{\bar{z}_i\partial \bar{z}_j}(0).
\end{equation}

The sum I can be decomposed inyo 3 \textquoteleft sub-sums\textquoteright : I${}_a$, I${}_b$ and I${}_c$  in which the two $\0_3$'s appearing are respectively both at the left of $P^N$, either side of $P^N$ or both at the right of $P^N$ (see Remark~\ref{remO3O4}). As usual, we have $\mathrm{I}_c =( \mathrm{I}_a)^*$.

In the same way, we can decompose $\mathrm{II}=\mathrm{II}_a+\mathrm{II}_b$: in $\mathrm{II}_a$ the $\0_4$ appears at the left of $P^N$, and in $\mathrm{II}_b$ at the right of $P^N$. Once again, $\mathrm{II}_b =( \mathrm{II}_a)^*$.

\subsubsection*{Computation of $\mathrm{I}_b(0,0)$}

To compute I${}_b$, we first compute the value at $(0,Z)$ of the kernel of
\begin{equation}
\label{defnAk}
 A_k := I_{2j} (\LL_0{}^{-1} \0_2^{+2})^{j-k-1} (\LL_0{}^{-1} \0_3^{+2}) (\LL_0{}^{-1} \0_2^{+2})^k \PP I_0.
 \end{equation}

By \eqref{calculA} and \eqref{O3O4},
\begin{align}
\label{Ak}
A_k &= I_{2j} (\LL_0{}^{-1} \0_2^{+2})^{j-k-1} (\LL_0{}^{-1} \0_3^{+2}) \frac{1}{(4\pi)^k}\frac{1}{2^k k!} \RR_{x_0}^{ k} \PP I_0 \notag \\
&=\frac{1}{(4\pi)^k}\frac{1}{2^k k!} I_{2j} (\LL_0{}^{-1} \0_2^{+2})^{j-k-1}  \LL_0{}^{-1} \left[ z_i \derpar{\RR_{\cdot}}{z_i}(0)+\bar{z}_i \derpar{\RR_{\cdot}}{\bar{z}_i}(0)  \right]   \RR_{x_0}^{ k} \PP I_0.
\end{align}
Now by Theorem \ref{elementspropresdeL}, if $s \in N$, then $z_i s \in N$, so by the same calculation as in \eqref{calculA},

\begin{align}
\label{avecz}
& \frac{1}{(4\pi)^k}\frac{1}{2^k k!} \left(I_{2j}  (\LL_0{}^{-1} \0_2^{+2})^{j-k-1}  \LL_0{}^{-1} \left[ z_i \derpar{\RR_{\cdot}}{z_i}(0) \right]   \RR_{x_0}^{ k} \PP I_0 \right)(0,Z) \notag \\
&= \frac{1}{(4\pi)^j}\frac{1}{2^j j!} \left(I_{2j}  \left[ \RR_{x_0}^{ j-k-1}\derpar{\RR_{\cdot}}{z_i}(0)  \RR_{x_0}^{ k}  \right]z_i\PP I_0 \right)(0,Z) =0.
\end{align}

Now by \eqref{defbi} and the formula \eqref{formuleP}, we have 
\begin{equation}
\label{biP}
(b_i^+\PP)(Z,Z')=0 \text{ and } (b_i\PP)(Z,Z')=2\pi(\bar{z}_i - \bar{z}_i') \PP(Z,Z').
\end{equation}

Thus,
\begin{align}
\label{aveczbar}
&\frac{1}{(4\pi)^k}\frac{1}{2^k k!} \left( I_{2j} (\LL_0{}^{-1} \0_2^{+2})^{j-k-1}  \LL_0{}^{-1} \left[ \bar{z}_i \derpar{\RR_{\cdot}}{\bar{z}_i}(0)  \right]   \RR_{x_0}^{ k} \PP I_0\right)(Z,Z')  \\
&= \frac{1}{(4\pi)^k}\frac{1}{2^k k!} \left( I_{2j} (\LL_0{}^{-1} \0_2^{+2})^{j-k-1}  \LL_0{}^{-1} \left[ \derpar{\RR_{\cdot}}{\bar{z}_i}(0)    \RR_{x_0}^{ k}  \right] \left(\frac{b_i}{2\pi} + \bar{z}'_i\right)\PP I_0\right)(Z,Z') \notag \\
&= \frac{1}{(4\pi)^k}\frac{1}{2^k k!}  \left( I_{2j} (\LL_0{}^{-1} \0_2^{+2})^{j-k-1} \left[ \derpar{\RR_{\cdot}}{\bar{z}_i}(0)  \RR_{x_0}^{ k}  \right] \right.\notag \\
& \qquad \qquad \qquad \qquad \qquad \qquad \qquad \qquad \times \left.\left(\frac{1}{4\pi(2k+2+1)}\frac{b_i}{2\pi} +\frac{1}{4\pi(2k+2)} \bar{z}'_i\right)\PP I_0\right)(Z,Z') \notag\\
&= \frac{1}{(4\pi)^j}\frac{1}{2^j j!} \left( I_{2j}  \left[ \RR_{x_0}^{ j-k-1}\derpar{\RR_{\cdot}}{\bar{z}_i}(0)  \RR_{x_0}^{ k}  \right] \bar{z}'_i \PP I_0\right)(Z,Z')\notag \\
&  +  \frac{1}{(4\pi)^j}\frac{1}{2^k k!\prod_{k+1}^j(2\ell+1)} \left( I_{2j}  \left[ \RR_{x_0}^{ j-k-1}\derpar{\RR_{\cdot}}{\bar{z}_i}(0)  \RR_{x_0}^{ k}  \right] \frac{b_i}{2\pi}\PP I_0\right)(Z,Z').\notag
\end{align}
For the last two lines, we used that if $s \in N$, then $\LL(b_is)=4 \pi b_is$ (see Theorem \ref{elementspropresdeL}). Thus, by \eqref{defCjk} and \eqref{Ak}--\eqref{aveczbar}
\begin{align}
\label{Ak(0,Z)}
A_k(0,Z)&=\frac{1}{(4\pi)^k}\frac{1}{2^k k!} \left( I_{2j} (\LL_0{}^{-1} \0_2^{+2})^{j-k-1}  \LL_0{}^{-1} \left[ \bar{z}_i \derpar{\RR_{\cdot}}{\bar{z}_i}(0)  \right]   \RR_{x_0}^{ k} \PP I_0\right)(0,Z) \notag\\
&=  \Big(\CC_j(j)-\CC_j(k)\Big)  I_{2j}  \left[ \RR_{x_0}^{ j-k-1}\derpar{\RR_{\cdot}}{\bar{z}_i}(0)  \RR_{x_0}^{ k}  \right] \bar{z}_i\PP (0,Z)I_0.
\end{align}

We know that $(\bar{z}_i \PP)^* = z_i \PP$, and $\int_{\C^n} z_m \bar{z}_{q} e^{-\pi |z|^2}dZ = \frac{1}{\pi} \delta_{mq}$, so
\begin{align}
(A_{k_1} A_{k_2}{}^*)(0,0) &=  \frac{1}{\pi}  I_{2j}\left[ \Big(\CC_j(j)-\CC_j(k_1)\Big)\RR_{x_0}^{ j-k_1-1}\derpar{\RR_{\cdot}}{\bar{z}_i}(0)  \RR_{x_0}^{ k_1}  \right] \notag \\
& \qquad \qquad \qquad  \qquad \times\left[ \Big(\CC_j(j)-\CC_j(k_2)\Big) \RR_{x_0}^{ j-k_2-1}\derpar{\RR_{\cdot}}{\bar{z}_i}(0)  \RR_{x_0}^{ k_2}  \right]^* I_{2j}.
\end{align}
Finally,
\begin{multline}
\label{Ibprecis}
\mathrm{I}_b (0,0) =   \frac{1}{\pi} I_{2j} \left[ \sum_{k=0}^{j-1}  \Big(\CC_j(j)-\CC_j(k)\Big)\RR_{x_0}^{ j-k-1}\derpar{\RR_{\cdot}}{\bar{z}_i}(0)  \RR_{x_0}^{ k}  \right]  \\
 \times\left[\sum_{k=0}^{j-1}\Big(\CC_j(j)-\CC_j(k)\Big) \RR_{x_0}^{ j-k-1}\derpar{\RR_{\cdot}}{\bar{z}_i}(0)  \RR_{x_0}^{ k}  \right]^* I_{2j}.
\end{multline}

\subsubsection*{Computation of $\mathrm{I}_a(0,0)$ and $\mathrm{I}_c(0,0)$}

First recall that $\mathrm{I}_c(0,0)  = \left( \mathrm{I}_a(0,0) \right)^*$, so we just need to compute $\mathrm{I}_a(0,0)$. By the definition of $\mathrm{I}_a(0,0)$, for it to be non-zero, it is necessary to have $j\geq 2$, which will be assumed in this paragraph. Let
\begin{equation}
\label{defnAkl}
A_{k,\ell} := I_{2j} (\LL_0{}^{-1} \0_2^{+2})^{j-k-\ell-2} (\LL_0{}^{-1} \0_3^{+2}) (\LL_0{}^{-1} \0_2^{+2})^k(\LL_0{}^{-1} \0_3^{+2}) (\LL_0{}^{-1} \0_2^{+2})^\ell \PP I_0,
\end{equation}
the sum I${}_a(0,0)$ is then given by
\begin{equation}
\label{IaavecAkl}
\mathrm{I}_a(0,0)=\int_{\R^{2n}} \left(\sum_{k,\ell} A_{k,\ell}(0,Z)\right) \times \left( \frac{1}{(4\pi)^j} \frac{1}{2^j j!}I_{2j} \RR_{x_0}^{ j} \PP I_0 \right)^*(Z,0) dv_{TX}(Z).
\end{equation}

In the following, we will set 
\begin{equation}\tilde{b}_i:=\frac{b_i}{2\pi}.
\end{equation}
Using the same method as in \eqref{commutation}, \eqref{avecz}, \eqref{biP} and  \eqref{aveczbar}, we find that there exist constants  $C^1_{k,\ell}$, $C^2_{k,\ell}$ given by
\begin{equation}
\label{C12}
\begin{aligned}
&C^1_{k,\ell} =  \frac{1}{(4\pi)^{k+\ell+1}}\frac{1}{2^{k+\ell+1}(k+\ell +1)!}, \\
&C^2_{k,\ell} = \frac{1}{(4\pi)^{k+\ell+1}}\frac{1}{2^{\ell}\ell! \prod_{\ell+1}^{k+\ell+1}(2s+1)},
\end{aligned}
\end{equation}
such that
\begin{align}
\label{premiercalcul}
&(\LL_0{}^{-1} \0_3^{+2}) (\LL_0{}^{-1} \0_2^{+2})^k(\LL_0{}^{-1} \0_3^{+2}) (\LL_0{}^{-1} \0_2^{+2})^\ell \PP I_0 \\
&= \LL_0{}^{-1} \left\{ \derpar{\RR_{\cdot}}{z_i}(0)\RR_{x_0}^{ k}\derpar{\RR_{\cdot}}{z_{i'}}(0)\RR_{x_0}^{ \ell}C^1_{k,\ell}z_iz_{i'}+ \derpar{\RR_{\cdot}}{\bar{z}_i}(0)\RR_{x_0}^{ k}\derpar{\RR_{\cdot}}{z_{i'}}(0)\RR_{x_0}^{ \ell}C^1_{k,\ell}z_{i'} (\tilde{b}_i\!+\bar{z}'_i)\right.\notag \\
&\qquad \qquad  \qquad \qquad + \derpar{\RR_{\cdot}}{z_i}(0)\RR_{x_0}^{ k}\derpar{\RR_{\cdot}}{\bar{z}_{i'}}(0)\RR_{x_0}^{ \ell}z_i(C^2_{k,\ell}\tilde{b}_{i'}+C^1_{k,\ell}\bar{z}_{i'}') \notag  \\
&\qquad \qquad \qquad \qquad \qquad \qquad + \left. \derpar{\RR_{\cdot}}{\bar{z}_i}(0)\RR_{x_0}^{ k}\derpar{\RR_{\cdot}}{\bar{z}_{i'}}(0)\RR_{x_0}^{ \ell}(\tilde{b}_i+\bar{z}'_i)(C^2_{k,\ell}\tilde{b}_{i'}+C^1_{k,\ell}\bar{z}_{i'}') \right\}\PP I_0 \notag \\
&= \LL_0{}^{-1} \! \left\{ \derpar{\RR_{\cdot}}{z_i}(0)\RR_{x_0}^{ k}\derpar{\RR_{\cdot}}{z_{i'}}(0)\RR_{x_0}^{ \ell}C^1_{k,\ell}z_iz_{i'}  + \derpar{\RR_{\cdot}}{\bar{z}_i}(0)\RR_{x_0}^{ k}\derpar{\RR_{\cdot}}{z_{i'}}(0)\RR_{x_0}^{ \ell}C^1_{k,\ell}\Big(\tilde{b}_iz_{i'} \! + \! \frac{\delta_{ii'}}{\pi} \! + \! z_{i'}\bar{z}'_i \Big)\right. \notag \\
&\qquad \qquad \qquad+\derpar{\RR_{\cdot}}{z_i}(0)\RR_{x_0}^{ k}\derpar{\RR_{\cdot}}{\bar{z}_{i'}}(0)\RR_{x_0}^{ \ell}\left(C^2_{k,\ell} \Big(\tilde{b}_{i'}z_i +\frac{\delta_{ii'}}{\pi}\Big)+C^1_{k,\ell}z_i\bar{z}_{i'}'\right) \notag \\
&\qquad \qquad \qquad \qquad + \left. \derpar{\RR_{\cdot}}{\bar{z}_i}(0)\RR_{x_0}^{ k}\derpar{\RR_{\cdot}}{\bar{z}_{i'}}(0)\RR_{x_0}^{ \ell}\Big(C^2_{k,\ell}(\tilde{b}_i \tilde{b}_{i'}+ \bar{z}'_i \tilde{b}_{i'})+ C^1_{k,\ell}( \tilde{b}_i \bar{z}'_{i'}+\bar{z}'_i\bar{z}_{i'}')\Big) \right\}\PP I_0,\notag
\end{align}

Using Theorem \ref{elementspropresdeL}, \eqref{commutation} and \eqref{biP}, we see that there exist constants $C_{j,k,\ell}^i$, $i=3,\dots, 10$, such that 
\begin{equation}
\label{relationsC}
\begin{aligned}
& C^3_{j,k,\ell}=C^1_{k,\ell} \frac{1}{(4\pi)^{j-(k+\ell+1)}}\frac{1}{\prod_{k+\ell+2}^{j}(2s)},
\quad C^4_{j,k,\ell} =C^1_{k,\ell}  \frac{1}{(4\pi)^{j-(k+\ell+1)}}\frac{1}{\prod_{k+\ell+2}^{j}(2s+1)}, \\
& C^5_{j,k,\ell} =C^2_{k,\ell}  \frac{1}{(4\pi)^{j-(k+\ell+1)}}\frac{1}{\prod_{k+\ell+2}^{j}(2s+1)}, 
\quad C^6_{j,k,\ell} = C^2_{k,\ell} \frac{1}{(4\pi)^{j-(k+\ell+1)}}\frac{1}{\prod_{k+\ell+2}^{j}(2s)},
\end{aligned}
\end{equation}
and
\begin{align}
\label{deuxiemalcul}
&A_{k,\ell}(0,Z) =I_{2j}\left(\RR_{x_0}^{ {j-k-\ell-2}} \left \{ \derpar{\RR_{\cdot}}{\bar{z}_i}(0)\RR_{x_0}^{ k}\derpar{\RR_{\cdot}}{z_{i'}}(0)\Big(C^3_{j,k,\ell} \frac{\delta_{ii'}}{\pi}+C^4_{j,k,\ell}\tilde{b}_iz_{i'}\Big)  \right. \right. \\
& \quad+\derpar{\RR_{\cdot}}{z_i}(0)\RR_{x_0}^{ k}\derpar{\RR_{\cdot}}{\bar{z}_{i'}}(0)\Big(C^5_{j,k,\ell} \tilde{b}_{i'}z_i + \frac{\delta_{ii'}}{\pi}C^6_{j,k,\ell}\Big) \notag \\
& \quad  + \left. \left. \derpar{\RR_{\cdot}}{\bar{z}_i}(0)\RR_{x_0}^{ k}\derpar{\RR_{\cdot}}{\bar{z}_{i'}}(0)\Big(C^7_{j,k,\ell} \tilde{b}_i \tilde{b}_{i'}+ C^8_{j,k,\ell}\bar{z}_i \tilde{b}_{i'}+ C^9_{j,k,\ell} \tilde{b}_i \bar{z}_{i'}+C^{10}_{j,k,\ell}\bar{z}_i\bar{z}_{i'}\Big) \right\} \RR_{x_0}^{ \ell}\PP I_0\right)(0,Z) \notag \\
&=I_{2j} \RR_{x_0}^{ {j-k-\ell-2}} \left \{ \derpar{\RR_{\cdot}}{\bar{z}_i}(0)\RR_{x_0}^{ k}\derpar{\RR_{\cdot}}{z_{i}}(0)\frac{C^3_{j,k,\ell}-C^4_{j,k,\ell}}{\pi} \right. +\derpar{\RR_{\cdot}}{z_i}(0)\RR_{x_0}^{ k}\derpar{\RR_{\cdot}}{\bar{z}_{i}}(0)\frac{C^6_{j,k,\ell}-C^5_{j,k,\ell}}{\pi} \notag \\
& \quad \left. +\derpar{\RR_{\cdot}}{\bar{z}_i}(0)\RR_{x_0}^{ k}\derpar{\RR_{\cdot}}{\bar{z}_{i'}}(0)\Big(4 C^7_{j,k,\ell}-2 C^8_{j,k,\ell}-2 C^9_{j,k,\ell} +C^{10}_{j,k,\ell}\Big)\bar{z}_i\bar{z}_{i'}\right\} \RR_{x_0}^{ \ell}  \PP(0,Z) I_0.\notag 
\end{align}

Now with $\int \bar{z}_i\bar{z}_{i'} \PP(0,Z)\PP(Z,0) dZ = 0$, we can rewrite \eqref{IaavecAkl}:
\begin{multline}
\label{Iaaveclescstes}
\mathrm{I}_a(0,0)=  \frac{\CC_j(j)}{\pi}  I_{2j}  \sum_{k,\ell} \RR_{x_0}^{ {j-k-\ell-2}} \left\{ (C^3_{j,k,\ell}-C^4_{j,k,\ell})\derpar{\RR_{\cdot}}{\bar{z}_i}(0)\RR_{x_0}^{ k}\derpar{\RR_{\cdot}}{z_{i}}(0) \right.\\
\left. +(C^6_{j,k,\ell}-C^5_{j,k,\ell}) \derpar{\RR_{\cdot}}{z_i}(0)\RR_{x_0}^{ k}\derpar{\RR_{\cdot}}{\bar{z}_{i}}(0)\right\}\RR_{x_0}^{ \ell} \left( \RR_{x_0}^{ j}  \right)^* I_{2j}. 
\end{multline}

By \eqref{defCjk}, \eqref{C12} and \eqref{relationsC},
\begin{equation}
\label{C456}
\begin{aligned}
C^3_{j,k,\ell}=\CC_j(j),
&\qquad C^4_{j,k,\ell} =\CC_j (k+\ell+1), \\
C^5_{j,k,\ell} =\CC_j(\ell), 
&\qquad C^6_{j,k,\ell} =\CC_j(\ell)\prod_{s=k+\ell+2}^j \left(1+ \frac{1}{2s} \right).
\end{aligned}
\end{equation}
We can now write \eqref{Iaaveclescstes} more precisely:
\begin{align}
\label{Iaprecis}
\mathrm{I}_a(0,0)&=    \frac{\CC_j(j)}{\pi}   I_{2j}  \sum_{q=0}^{j-2}\sum_{m=0}^{q} \left \{ \Big(\CC_j(j)-\CC_j(q+1) \Big)  \RR_{x_0}^{j-(q+2)}\derpar{\RR_{\cdot}}{\bar{z}_i}(0)\RR_{x_0}^{q-m}\derpar{\RR_{\cdot}}{z_i}(0)\RR_{x_0}^{m} \right. \notag \\
& +\CC_j\left(m \right) \left[\prod_{q+2}^j\Big(1+\frac{1}{2s}\Big)-1 \right] \left. \RR_{x_0}^{j-(q+2)} \derpar{\RR_{\cdot}}{z_i}(0)\RR_{x_0}^{q-m}\derpar{\RR_{\cdot}}{\bar{z}_i}(0)\RR_{x_0}^{m} \right\}\left( \RR_{x_0}^{ j}  \right)^* I_{2j}. 
\end{align}

\subsubsection*{Computation of $\mathrm{II}(0,0)$}

Recall that $\mathrm{II}(0,0)=\mathrm{II}_a(0,0)+ \left( \mathrm{II}_a(0,0) \right)^*$. The computation of $\mathrm{II}_a(0,0)$ is very similar to the computation of $\mathrm{I}_a(0,0)$, and is simpler, so we will follow the same method.

Let
$$B_{k} := I_{2j} (\LL_0{}^{-1} \0_2^{+2})^{j-k-1} (\LL_0{}^{-1} \0_4^{+2}) (\LL_0{}^{-1} \0_2^{+2})^k \PP I_0,$$
the sum II${}_a(0,0)$ is then given by
\begin{equation}
\label{IIaavecAk}
\mathrm{II}_a(0,0)=\int_{\R^{2n}} \left(\sum_{k} B_{k}(0,Z)\right) \times \left( \frac{1}{(4\pi)^j} \frac{1}{2^j j!} I_{2j} \RR_{x_0}^{ j} \PP I_0 \right)^*(Z,0) dv_{TX}(Z).
\end{equation}

Using \eqref{expression04+2}, we can repeat what we have done for  \eqref{premiercalcul} and \eqref{deuxiemalcul}. We find that there is a constant $C$ (which we do not need to compute) such that 
\begin{multline}
B_k(0,Z) =  I_{2j}\left \{ \RR_{x_0}^{j-(k+1)} \derpar{^2\RR_\cdot}{ z_i \partial \bar{z}_{i}} (0) \RR_{x_0}^k \frac{\CC_j(j)-\CC_j(k)}{\pi}  \right.  \\
\left. +  \RR_{x_0}^{j-(k+1)} \derpar{^2\RR_\cdot}{\bar{z}_i \partial \bar{z}_{i'}} (0) \RR_{x_0}^k C\frac{\bar{z}_i \bar{z}_{i'}}{2} \right \} \PP(0,Z)I_0.
\end{multline}
Thus, we get 
\begin{equation}
\label{IIaprecis}
\mathrm{II}_a(0,0)= \frac{\CC_j(j)}{\pi}  I_{2j}  \sum_{k=0}^{j-1} \Big( \CC_j(j)-\CC_j(k) \Big) \RR_{x_0}^{j-(k+1)} \derpar{^2\RR_\cdot}{z_i \partial \bar{z}_{i}}(0) \RR_{x_0}^k \left( \RR_{x_0}^{ j}  \right)^* I_{2j} .
\end{equation}

\subsubsection*{Conclusion}

In order to conclude the proof of Theorem \ref{thm2}, we just have to put the pieces together. But before that, as we want to write the formulas in a more intrinsic way, we have to note that since we trivialized $\Wedge(T^*X)\otimes E$ with $\nabla^{\Wedge \otimes E}$, since $\bw_i = \sqrt{2}\derpar{}{\bar{z}_i}$, and thanks to \cite[(5.44),(5.45)]{MR2876259}, we have
\begin{align}
&\derpar{\RR_{\cdot}}{\bar{z}_i}(0) = \frac{1}{\sqrt{2}}\left( \nabla^{\Wedge \otimes E}_{\bw_i}\RR_{\cdot} \right)(x_0), \quad \derpar{\RR_{\cdot}}{z_i}(0) = \frac{1}{\sqrt{2}}\left( \nabla^{\Wedge \otimes E}_{w_i}\RR_{\cdot} \right)(x_0)\text{ and } \\
&\derpar{^2\RR_{\cdot}}{z_i \partial \bar{z}_{i}}(0)= -\frac{1}{4}(\Delta^{\Wedge \otimes E} \RR_{\cdot}) (x_0) .
\end{align}
With these remarks and  equations  \eqref{IIIaetIIIb},  \eqref{III}, \eqref{Ibprecis}, \eqref{Iaprecis}, \eqref{IIaprecis} used in decomposition \eqref{decompo2eterme}, we get Theorem \ref{thm2}.


\section{The third coefficient in the asymptotic expansion when the first two vanish}
\label{3rdcoeff}

In this section, we prove Theorem \ref{thm3}. Using \eqref{formulebr}, we know that 
\begin{equation}
I_{2j} \boldsymbol{b}_{2j+2} I_{2j}(0,0)=I_{2j} \FF_{4j+4} I_{2j}(0,0).
\end{equation}

 Here again, we will first decompose this term into several terms in Section \ref{decompodupb2}, and then in Sections \ref{typeI}, \ref{typeII} and \ref{typeIII} we handle them separately. 
 
 For the rest of the section we fix $j \in \llbracket 1, n \rrbracket$, and we suppose that 
 \begin{equation}
I_{2j} \boldsymbol{b}_{2j} I_{2j}(0,0)=I_{2j} \boldsymbol{b}_{2j+1} I_{2j}(0,0)=0.
\end{equation}
By Theorems \ref{thm1} and \ref{thm2}, this is equivalent to
\begin{equation}
\label{conditions}
\left\{\begin{aligned}
&\RR_x ^j=0  \\
&\T_0(j)=0.
\end{aligned}\right.
\end{equation}

  For every smoothing operator $F$ acting on $L^2(\R^{2n},\E_{x_0})$ that appears in this section, we will denote by $F(Z,Z')$ its smooth kernel with respect to $dv_{TX}(Z')$. Moreover, recall that every operators $A$ we have
   \begin{equation}
  \mathrm{Pos}[A]=AA^* \quad \text{and} \qquad \mathrm{Sym}[A]=A+A^*.
  \end{equation}
 
\subsection{Decomposition of the computation}
\label{decompodupb2}

With the same reasoning as in Section \ref{decompodupb}, we see that in the decomposition \eqref{expressionFr} of $I_{2j} \FF_{4j+4} I_{2j}$, the non-zero terms $\int_\delta T^\eta_{\mathbf{r}}(\lambda) d\lambda$ appearing  satisfy $k=2j,\, 2j+1$ or $2j+2$. Moreover, we can find the possible terms by adding one term to or modifying the subscript of the terms we mentioned in section \ref{decompodupb}. The list of possible terms is as follows.
\begin{enumerate}[label=\Roman*.]
\item The terms such that $k=2j+2$. 

Here, there are up to three indices $i$ such that $\eta_{i}=N$ is  and are in $\{j,j+1,j+2\}$. Moreover, the only $\0_\ell$'s appearing are some $\0_2$'s. The possibilities are now
\begin{enumerate}[label=I-\alph*)]
\item $2j+2$ times $\0_2^{\pm2}$,
\item $2j$ times $\0_2^{\pm2}$ and 2 times $\0_2^{0}$.
\end{enumerate}

\item The terms such that $k=2j+1$. 

Here, there are one or two indices $i$ such that $\eta_{i}=N$ is  and are in $\{j,j+1\}$, and there is exactly one $\0_\ell^0$ that appears in these terms. We regroup them in relation to the $\0_{r_i}$ that they contain.
\begin{enumerate}[label=II-\alph*)]
\item $2j$ times $\0_2^{\pm2}$ and 1 time $\0_4^{0}$,
\item $2j-1$ times $\0_2^{\pm2}$, 1 time $\0_2^0$ and 1 time $\0_4^{\pm2}$,
\item $2j-1$ times $\0_2^{\pm2}$, 1 time $\0_3^{\pm2}$ and 1 time $\0_3^0$,
\item $2j-2$ times $\0_2^{\pm2}$, 1 time $\0_2^0$ and 2 times $\0_3^{\pm2}$.
\end{enumerate}

\item The terms such that $k=2j$. 

Here, the $i_0$ such that $\eta_{i_0}=N$ is unique and equal to $j$, and no $\0_\ell^0$ appears in these terms. We regroup them in relation to the $\0_{r_i}$ that they contain.
\begin{enumerate}[label=III-\alph*)]
\item $2j-4$ times $\0_2^{\pm2}$ and 4 times $\0_3^{\pm2}$,
\item $2j-3$ times $\0_2^{\pm2}$, 2 times $\0_3^{\pm2}$ and 1 time $\0_4^{\pm2}$,
\item $2j-2$ times $\0_2^{\pm2}$ and 2 times $\0_4^{\pm2}$,
\item $2j-2$ times $\0_2^{\pm2}$, 1 time $\0_3^{\pm2}$ and 1 time $\0_5^{\pm2}$
\item $2j-1$ times $\0_2^{\pm2}$ and 1 time $\0_6^{\pm2}$.
\end{enumerate}
\end{enumerate}

This list seem quite long, but fortunately most of the terms will ultimately vanish due to the fact that they make appear some terms involved in $I_{2j} \boldsymbol{b}_{2j} I_{2j}$ and $I_{2j} \boldsymbol{b}_{2j+1} I_{2j}$.

In the sequel, the contribution to the third coefficient of the terms of type I-a), I-b), etc. will be denoted by $T_{\text{I-a)}}$, $T_{\text{I-b)}}$, etc.

\subsection{The terms of type I}
\label{typeI}

We first begin with the following observation.
\begin{lemme}
\label{A_a_i}
For any $j$-tuple $(a_1,\dots,a_j)$ of positive integers, we have
\begin{equation}
X_{(a_1,\dots,a_j)}:=I_{2j}\left(\prod_{i=1}^j\LL_0{}^{-a_i}\0_2^{+2}\right)P^N =0.
\end{equation}
\end{lemme}
\begin{proof}
This is an easy extension of the computation \eqref{calculA}, using the fact that $\RR^j_x=0$.
\end{proof}

\subsubsection*{The terms of type I-a)}

In these terms, only some $\0_2^{\pm2}$ appears, so there is either a unique $i_0$ such that $\eta_{i_0}=N$ which is then equal to $j$ or $j+2$, either exactly two such $i_0$'s which are then $j$ and $j+2$.

Each term that is in the second case is a sum of term of the form 
\begin{equation}
- X_{(a_1,\dots,a_j)}\0_2^{-2}\LL_0{}^{-b}\0_2^{+2}X_{(a'_1,\dots,a'_j)}^*
\end{equation}
with $a_i,a'_k,b \in \{1,2\}$ (exactly one is equal to 2). By Lemma \ref{A_a_i}, these terms vanish.

Now, each term in the first case is equal or adjoint to a term of the form
\begin{equation}
 I_{2j}\left(\prod_{i=1}^{j+2}\LL_0{}^{-1}\0_2^{\e_i}\right)\PP I_0 \left(I_{2j}\left(\LL_0{}^{-1}\0_2^{+2}\right)^j\PP I_0\right)^*,
\end{equation}
where  $\e_i\in\{-2,+2\}$ (exactly one of the $\e_i$'s  is equal to -2).  By Lemma, \ref{A_a_i}, these terms vanish.

Finally, every term of type I-a) vanishes and $T_{\text{I-a)}}=0$.

 \subsubsection*{The terms of type I-b)}
 
Using Lemma \ref{A_a_i} as above, we see that the only non-zero terms of this type satisfy that before the first index $i$ such that $\eta_i=N$ and after the last, there must be a $\0_2^0$ appearing. As a consequence, the cases where two or three $\eta_i$'s are equal to $N$ lead to vanishing terms. We now deal with the terms where $\eta_{j+1}=N$ and for $i\neq j+1$, $\eta_i = N^\perp$. Such terms are of the form
\begin{equation}
\Big(I_{2j} (\LL_0{}^{-1}\0_2^{+2})^{j-k}\LL_0{}^{-1}\0_2^{0}(\LL_0{}^{-1}\0_2^{+2})^{k} \PP \Big)\Big(I_{2j} (\LL_0{}^{-1}\0_2^{+2})^{j-k'}\LL_0{}^{-1}\0_2^{0}(\LL_0{}^{-1}\0_2^{+2})^{k'} \PP \Big)^*,
\end{equation}
 for $0\leq k,k' \leq j$. By the computations of Section \ref{onlyO2}, and in particular \eqref{calculavec1O20}, we find
 \begin{equation}
 \begin{aligned}
& I_{2j}\Big( (\LL_0{}^{-1}\0_2^{+2})^{j-k}\LL_0{}^{-1}\0_2^{0}(\LL_0{}^{-1}\0_2^{+2})^{k} \Big)\PP I_0= \\
& \qquad I_{2j} \RR_{x}^{j-k} \left[ \frac{1}{6} \! \left( \! \CC_{j+1}(j+1) \! - \! \frac{\CC_j(k)}{2\pi(2k+1)} \! \right) \!r^X_{x} \! - \! \frac{\CC_j(k)}{4\pi(2k+1)}\ic R^E_{\Lambda,x} \! \right]\RR_{x}^{k} \PP I_0.
 \end{aligned}
 \end{equation}
 
 Observe that $r^X$ commutes with $\RR$, and that $\RR^j=0$, so the contribution of the terms of type I-b) is finally $T_{\text{I-b)}}=\mathrm{Pos} \left[ \T'_3(j) \right]$.

\subsection{The terms of type II}
\label{typeII}

  \subsubsection*{The terms of type  II-a)}
 
 In these terms, there are either only $\0_2^{-2}$ appearing at the right of the first $P^N$ or only $\0_2^{+2}$ appearing at the left of the last $P^N$. Either way, all these terms vanish by Lemma \ref{A_a_i}. Hence $T_{\text{II-a)}}=0$.

   \subsubsection*{The terms of type  II-b)}
 
 For these terms, there are two possibilities. 
 
 Firstly, there are two indices $i$ such that $\eta_{i}=N$, and then they are equal to $j$ and $j+1$. In this case, either before the first $P^N$, either after the last, there are $j$ $\0_2^{+2}$'s (or $\0_2^{-2}$'s) that appear, so all these terms vanish. 
 
 Secondly, there is a unique $i_0$ such that $\eta_{i_0}=N$ and it is equal to $j$ or $j+1$. We denote by $S_1$ (resp. $S_2$) the sum of the term for which $i_0=j$ (resp. $i_0=j+1$). Then $S_1=S_2^*$ and
 \begin{equation}
 \begin{aligned}
 \label{S2}
 S_2&= \sum_{k,\ell} \left\{  I_{2j} (\LL_0{}^{-1} \0_2^{+2})^{j-k-1} (\LL_0{}^{-1} \0_4^{+2}) (\LL_0{}^{-1} \0_2^{+2})^k \PP I_0 \right \}\\
 & \qquad \qquad \qquad \qquad \qquad \qquad \qquad \times \left\{ I_{2j} (\LL_0{}^{-1}\0_2^{+2})^{j-\ell}\LL_0{}^{-1}\0_2^{0}(\LL_0{}^{-1}\0_2^{+2})^{\ell} \PP I_0 \right\}^* \\
 &= \left\{ \sum_{k} I_{2j} (\LL_0{}^{-1} \0_2^{+2})^{j-k-1} (\LL_0{}^{-1} \0_4^{+2}) (\LL_0{}^{-1} \0_2^{+2})^k \PP I_0 \right \}\\
 & \qquad \qquad \qquad  \qquad \qquad \qquad \times \left\{ \sum_{\ell} I_{2j} (\LL_0{}^{-1}\0_2^{+2})^{j-\ell}\LL_0{}^{-1}\0_2^{0}(\LL_0{}^{-1}\0_2^{+2})^{\ell} \PP I_0 \right\}^*.
 \end{aligned}
 \end{equation}
By \eqref{calculavec1O20} and \eqref{IIaprecis} we find that the contribution of the terms of type II-b), i.e. $S_1(0,0)+S_2(0,0)$, is $T_{\text{II-b)}}=\mathrm{Sym}\left[\T_2(j)\T'_3(j)^*\right]$.

\subsubsection*{The terms of type  II-c)} 
 
 The computation is the same as for terms of type II-b), except that in the case of a unique $i_0$ such that $\eta_{i_0}=N$, in \eqref{S2} we must replace $\0_4^{+2}$ by $\0_3^{+2}$ and $\0_2^0$ by $\0_3^0$. Recall that $A_k$ has been defined in \eqref{defnAk}.  By \eqref{Ak(0,Z)} and \eqref{conditions}, we find that the contribution of the terms of type II-c) is the symmetric operator associated to
\begin{equation}
\left\{ \sum_{k} A_k \right \}\left\{ \sum_{\ell} I_{2j} (\LL_0{}^{-1}\0_2^{+2})^{j-\ell}\LL_0{}^{-1}\0_3^{0}(\LL_0{}^{-1}\0_2^{+2})^{\ell} \PP I_0 \right\}^*.
\end{equation}
By \eqref{conditions} we get $T_{\text{II-c)}}=0$.

\subsubsection*{The terms of type  II-d)} 

Here again, we have the same possibilities concerning the indices $i$ such that $\eta_i=N$ as for terms of types II-b) or II-c). If there are two such indices, then they are equal to $j$ and $j+1$ and between the two corresponding $P^N$'s we will have the term $\0_2^0$. By \eqref{PNO20PN=0}, these terms vanish.

We now suppose that there is a unique $i_0$ such that $\eta_{i_0}=N$. Then $i_0=j$ or $j+1$. As $\RR^j_x=0$, any term where the two $\0_3$'s and the $\0_2^0$ appear on the same side of $P^N$ will vanish. A term where there is 1 $\0_3$ at the left and 1 $\0_3$ at the right of $P^N$ is equal or adjoint to
\begin{equation}
I_{2j}\left(\prod_{i=1}^{j+1}\LL_0{}^{-1}\0_{a_i}^{\e_i}\right)\PP I_0 \times A_k^*,
\end{equation}
where $a_i=2$ or $3$ and $\e_i=+2$ except for exactly one $i_1$ satisfying $a_{i_1}=2$ (for which $\e_{i_1}=0$). By \eqref{Ak(0,Z)} and \eqref{conditions}, the sum of this terms vanishes.

Finally, the only possibility is that the two $\0_3$'s  appear on the same side of $P^N$, and $\0_2^0$ on the other side. Recall that $A_{k,\ell}$ has been define in \eqref{defnAkl}. The sum of the remaining terms is equal to
\begin{equation}
\mathrm{Sym}\left[\left\{ \sum_{k,\ell} A_{k,\ell} \right \}\left\{ \sum_{m} I_{2j} (\LL_0{}^{-1}\0_2^{+2})^{j-m}\LL_0{}^{-1}\0_2^{0}(\LL_0{}^{-1}\0_2^{+2})^{m} \PP I_0 \right\}^*\right].
\end{equation}
As a consequence, the contribution of terms of type II-d) is $T_{\text{II-d)}}=\mathrm{Sym}\left[\T_1(j)\T'_3(j)^*\right]$.

\subsection{The terms of type III}
\label{typeIII}

The computations rely on similar arguments that the ones undertaken in Sections \ref{typeI} and \ref{typeII}. We will therefore give the contribution of each sub-type directly. 

\subsubsection*{The terms of type  III-a)} 

The contribution of these terms is $T_{\text{III-a)}}=\mathrm{Pos}\left[\T_1(j)\right]$.

\subsubsection*{The terms of type  III-b)} 

The contribution of these terms is $T_{\text{III-b)}}=\mathrm{Sym}\left[\T_1(j)\T_2(j)^*\right]$

\subsubsection*{The terms of type  III-c)} 

The contribution of these terms is $T_{\text{III-c)}}=\mathrm{Pos}\left[\T_2(j)\right]$.

\subsubsection*{The terms of type  III-d)} 
   
The sum of all these terms vanishes: $T_{\text{III-d)}}=0$.

\subsubsection*{The terms of type  III-e)} 

This terms vanishes, so that $T_{\text{III-e)}}=0$.

By all the computations of  Sections \ref{typeI}, \ref{typeII} and \ref{typeIII}, we get Theorem \ref{thm3}.



\begin{thebibliography}{10}

\bibitem{MR1699887}
D.~Catlin.
\newblock The {B}ergman kernel and a theorem of {T}ian.
\newblock In {\em Analysis and geometry in several complex variables ({K}atata,
  1997)}, Trends Math., pages 1--23. Birkh{\"a}user Boston, Boston, MA, 1999.

\bibitem{Charbonneau:2011uq}
B.~Charbonneau and M.~Stern.
\newblock Asymptotic {H}odge theory of vector bundles.
\newblock {\em arXiv:1111.0591v1}, november 2011.

\bibitem{MR2215454}
X.~Dai, K.~Liu, and X.~Ma.
\newblock On the asymptotic expansion of {B}ergman kernel.
\newblock {\em J. Differential Geom.}, 72(1):1--41, 2006.

\bibitem{MR1916953}
S.~K. Donaldson.
\newblock Scalar curvature and projective embeddings. {I}.
\newblock {\em J. Differential Geom.}, 59(3):479--522, 2001.

\bibitem{Fine:2010fk}
J.~Fine.
\newblock Quantisation and the {H}essian of {M}abuchi energy.
\newblock {\em arXiv:1009.4543, Duke Math. J. to appear}.

\bibitem{MR1749048}
Z.~Lu.
\newblock On the lower order terms of the asymptotic expansion of
  {T}ian-{Y}au-{Z}elditch.
\newblock {\em Amer. J. Math.}, 122(2):235--273, 2000.

\bibitem{MR2827819}
X.~Ma.
\newblock Geometric quantization on {K}{\"{a}}hler and symplectic manifolds.
\newblock In {\em Proceedings of the {I}nternational {C}ongress of
  {M}athematicians. {V}olume {II}}, pages 785--810, New Delhi, 2010. Hindustan
  Book Agency.

\bibitem{Ma:2002fk}
X.~Ma and G.~Marinescu.
\newblock The spin${}^c$ dirac operator on high tensor powers of a line bundle.
\newblock {\em Mathematische Zeitschrift}, 240(3):651--664, 2002.

\bibitem{MR2246888}
X.~Ma and G.~Marinescu.
\newblock The first coefficients of the asymptotic expansion of the {B}ergman
  kernel of the {${\rm Spin}^c$} {D}irac operator.
\newblock {\em Internat. J. Math.}, 17(6):737--759, 2006.

\bibitem{ma-marinescu}
X.~Ma and G.~Marinescu.
\newblock {\em Holomorphic {M}orse inequalities and {B}ergman kernels}, volume
  254 of {\em Progress in Mathematics}.
\newblock Birkh{\"a}user Verlag, Basel, 2007.

\bibitem{MR2382740}
X.~Ma and G.~Marinescu.
\newblock Generalized {B}ergman kernels on symplectic manifolds.
\newblock {\em Adv. Math.}, 217(4):1756--1815, 2008.

\bibitem{MR2876259}
X.~Ma and G.~Marinescu.
\newblock Berezin-{T}oeplitz quantization on {K}{\"a}hler manifolds.
\newblock {\em J. Reine Angew. Math.}, 662:1--56, 2012.

\bibitem{MR1064867}
G.~Tian.
\newblock On a set of polarized {K}{\"a}hler metrics on algebraic manifolds.
\newblock {\em J. Differential Geom.}, 32(1):99--130, 1990.

\bibitem{MR2717131}
L.~Wang.
\newblock {\em Bergman kernel and stability of holomorphic vector bundles with
  sections}.
\newblock ProQuest LLC, Ann Arbor, MI, 2003.
\newblock Thesis (Ph.D.)--Massachusetts Institute of Technology.

\bibitem{MR2154820}
X.~Wang.
\newblock Canonical metrics on stable vector bundles.
\newblock {\em Comm. Anal. Geom.}, 13(2):253--285, 2005.

\bibitem{Xu:2011fk}
H.~Xu.
\newblock A closed formula for the asymptotic expansion of the bergman kernel.
\newblock {\em arXiv:1103.3060}, 03 2011.

\bibitem{MR1616718}
S.~Zelditch.
\newblock Szeg{\H o} kernels and a theorem of {T}ian.
\newblock {\em Internat. Math. Res. Notices}, (6):317--331, 1998.

\end{thebibliography}
\end{document}